\setlist[enumerate]{label={\arabic*.}}
\newcommand{\infcrit}[2]{G(#1,#2)}
\newtheorem{theorem}{Theorem}[section]
\newtheorem{lemma}[theorem]{Lemma}
\newtheorem{corollary}[theorem]{Corollary}
\theoremstyle{definition}
\newtheorem{definition}[theorem]{Definition}
\newcommand{\squishlist}{
 \begin{list}{$\bullet$}
  { \setlength{\itemsep}{0pt}
     \setlength{\parsep}{3pt}
     \setlength{\topsep}{3pt}
     \setlength{\partopsep}{0pt}
     \setlength{\leftmargin}{2.5em}
     \setlength{\labelwidth}{1em}
     \setlength{\labelsep}{0.5em} } }
\newcommand{\squishlisttwo}{
 \begin{list}{$\triangleright$}
  { \setlength{\itemsep}{0pt}
     \setlength{\parsep}{0pt}
    \setlength{\topsep}{0pt}
    \setlength{\partopsep}{0pt}
    \setlength{\leftmargin}{2em}
    \setlength{\labelwidth}{1.5em}
    \setlength{\labelsep}{0.5em} } }
\newcommand{\squishend}{
  \end{list}  }
\begin{document}

\title{Infinite families of $k$-vertex-critical ($P_5$, $C_5$)-free graphs}
\author{
Ben Cameron\\
%\small School of Computer Science\\
%\small University of Guelph\\
%\small Guelph, ON Canada\\
%\small ben.cameron@uoguelph.ca\\
\and
Ch\'{i}nh T. Ho\`{a}ng\\
}

\date{\today}

\maketitle

%=================================================================
%=================================================================
\begin{abstract}

A graph is $k$-vertex-critical if $\chi(G)=k$ but $\chi(G-v)<k$ for all $v\in V(G)$. We construct a new infinite families of $k$-vertex-critical $(P_5,C_5)$-free graphs for all $k\ge 6$. Our construction generalizes known constructions for $4$-vertex-critical $P_7$-free graphs and $5$-vertex-critical $P_5$-free graphs and is in contrast to the fact that there are only finitely many $5$-vertex-critical $(P_5,C_5)$-free graphs. In fact, our construction is actually even more well-structured, being $(2P_2,K_3+P_1,C_5)$-free.
\end{abstract}

\section{Introduction}
For a given integer $k$, the $k$-\textsc{Colouring} problem is to determine whether a given graph is $k$-colourable. This problem is known to be NP-complete for all $k\ge 3$~\cite{Karp1972} which led to tremendous interest in restricting the structure of input graphs such that polynomial-time $k$-\textsc{Colouring} algorithms can be developed. One of the most popular structural restrictions is to forbid an induced subgraph. Yet, for every graph $H$ that contains an induced cycle~\cite{KaminskiLozin2007} or claw~\cite{LevenGail1983, Holyer1981}, it remains NP-complete to solve $k$-\textsc{Colouring} on $H$-free graphs for all $k\ge 3$. Thus, assuming P$\neq$NP, if $k$-\textsc{Colouring} can be solved in polynomial time for $H$-free graphs for $k\ge 3$, $H$ must be a \textit{linear forest}, that is, a disjoint union of paths. 

To this end, it was shown that $k$-\textsc{Colouring} can be solved in polynomial-time for $P_5$-free graphs for all $k$~\cite{Hoang2010}. Further, $4$-\textsc{Colouring} $P_6$-free graphs~\cite{P6free1,P6free2,P6freeconf} and $3$-\textsc{Colouring} $P_7$-free graphs~\cite{Bonomo2018} can also be solved in polynomial-time. However,  solving $k$-\textsc{Colouring} of $P_t$-free graphs remains NP-complete if $t\ge 7$ and $k\ge 4$ or $t=6$ and $k\ge 5$~\cite{Huang2016}.  The complexity remains an open question for $P_t$-free when $t\ge 8$.

The fact that $P_5$ is the largest connected subgraph that can be forbidden such that $k$-\textsc{Colouring} can be solved in polynomial-time for all $k$ (again assuming P$\neq$NP) has generated special interest in further properties of $P_5$-free graphs. One such property is determining which subfamilies of $P_5$-free graphs admit polynomial-time \textit{certifying} $k$-\textsc{Colouring} algorithms. An algorithm is certifying if it returns a simple and easily verifiable witness with each output. The $k$-\textsc{Colouring} algorithms for $P_5$-free graphs developed in~\cite{Hoang2010} return $k$-colourings of the input graph in the case that the graph is $k$-colourable, but there is no certificate when the graph is not $k$-colourable. In general, a $k$-\textsc{Colouring} algorithm can return a $(k+1)$-vertex-critical induced subgraph to certify negative outputs. In fact, when the set of $(k+1)$-vertex-critical graphs in a given family of graphs is finite, then there exists a polynomial-time certifying $k$-\textsc{Coloring} algorithm for that family (see, for example,~\cite{P5banner2019}).

For $k$-vertex-critical $P_5$-free graphs, it is known that there are only finitely many for $k=4$~\cite{Hoang2015} and this finite list was used to develop a \textit{linear-time} $3$-\textsc{Colouring} algorithm for $P_5$-free graphs~\cite{MaffrayGregory2012}.  For $k\ge 5$, however, there are infinitely many $k$-vertex-critical $P_5$-free graphs~\cite{Hoang2015}. Thus, the search for subfamilies of $P_5$-free graphs admitting polynomial-time certifying $k$-\textsc{Coloruing} algorithms for $k\ge 4$ has turned to graphs that are $P_5$-free and $H$-free for other graphs $H$. We call these graphs $(P_5,H)$-free. The most comprehensive result to date on $k$-vertex-critical $(P_5,H)$-free graphs is the dichotomy theorem from~\cite{KCameron2021} that there are only finitely many for all $k$ if and only if $H$ is not $2P_2$ or $K_3+P_1$. It is further known that there are only finitely many $5$-vertex-critical $(P_5,H)$-free graphs when $H$ is any of:
\begin{itemize}
\item chair~\cite{HuangLi2023};
\item bull~\cite{HuangLiXia2023};
\item $C_5$~\cite{Hoang2015}.
\end{itemize}

\noindent For the above graphs, the cardinality of $k$-vertex-critical graphs remains unknown for all $k\ge 6$. 

On the infinite side, there are only two known constructions of $H$-free graphs where $H$ is a linear forest:

\begin{itemize}
\item $G_r$ is $4$-vertex-critical $P_7$-free for all $r\ge 1$~\cite{Chudnovsky4criticalconnected2020}, and
\item $G_p$ is $5$-vertex-critical $P_5$-free for all $p\ge 1$ (in fact, $(2P_2,K_3+P_1)$-free)~\cite{Hoang2015}.
\end{itemize}

These two families turn out to be special cases of the new infinite families of $k$-vertex-critical $(P_5,C_5)$-free graphs for each $k\ge 6$ that we will construct in this paper. Our families are the first defined by forbidden induced subgraphs where the boundary between only finitely many and infinitely many $k$-vertex-critical graphs is $k=6$ (all others happen at $k\le 5$). In light of the Strong Perfect Graph Theorem~\cite{Chudnovsky2006}, our result is somewhat surprising as every non-complete $P_5$-free $k$-vertex-critical graph must contain $C_5$ or $\overline{C_{2k+1}}$ for some $k\ge 3$. In addition, our result makes progress toward an open problem posed in~\cite{KCameron2021} to find a dichotomy theorem for the number of $k$-vertex-critical $(P_5,H)$-free graphs for all $k$ when $H$ is of order $5$. To see the state-of-the-art on this problem, see Table~\ref{tab:order5suvery}.

We provide our infinite families of vertex-critical graphs in Section~\ref{sec:infcrit}. Following that we provide a large table summarizing whether there are only finitely many or infinitely many $k$-vertex-critical $(P_5,H)$-free graphs for all $k$ in Section~\ref{sec:stateoftheart}. From this, we find that there are only $12$ such graphs $H$ remaining to get the complete dichotomy to solve the open problem from~\cite{KCameron2021}. We end this section with a brief section on notations and definitions.

\subsection{Notation and Definitions}

Let $\chi(G)$ denote the \textit{chromatic number} of $G$. A graph $G$ is \textit{$k$-vertex-critical} if $\chi(G)=k$ and $\chi(G-v)<k$ for all $v\in V(G)$. For $S\subseteq V(G)$, let $G[S]$ denote the subgraph of $G$ induced by $S$. For vertices $u,v\in V(G)$, we write $u\sim v$ if $u$ and $v$ are adjacent and $u\nsim v$ is $u$ and $v$ are non-adjacent. For $S,T\subseteq V(G)$, we say $S$ is \textit{complete} (\textit{anti-complete}) to $T$ if $s\sim t$ ($s\nsim t$) for all $s\in S$ and $t\in T$. For a $v\in V(G)$, $N(v)$ is the \textit{open neighbourhood} and denotes the set $\{u\in V(G): u\sim v\}$ and $N[v]$ is the \textit{closed neighbourhood}, denoting the set $N(v)\cup\{v\}$. %A \textit{module} of a graph $G$ is a set $S$ such that for all $v\in V(G)\setminus S$, $v$ is either complete or anti-complete to $S$. 
For $S\subseteq V(G)$, we say $S$ is a \textit{stable set} if $u\nsim v$ for all $u,v\in S$. For graphs $G$ and $H$, $G+H$ denotes the \textit{disjoint union} of $G$ and $H$ where $V(G+H)=V(G)\cup V(H)$ and $E(G+H)=E(G)\cup E(H)$. A graph $G$ is $H$-free if it does not contain any induced subgraph isomorphic to $H$.%A stable set is called \textit{good} if it meets every maximum clique of $G$. Given a graph $G$, a clique ($P_4$-free) expansion of $G$ is a graph obtained from $G$ by replacing some or all of its vertices by cliques ($P_4$-free graphs) and joining every vertex in these cliques ($P_4$-free graphs) according to the adjacencies of $G$.

%
%VERBATIM FROM P5 GEM FREE
%Perhaps the most comprehensive known result is the dichotomy for $H$ of order $4$ that in all cases except $H=2K_2$ or $H=K_3+P_1$, there are only a finite number of $k$-vertex critical $(P_5,H)$-free graphs for all $k$~\cite{KCameron2021}. The open problem of completing such a dichotomy for $H$ of order $5$ was also posed in~\cite{KCameron2021}. Toward resolving this open problem, it is known that there are only finitely many $k$-vertex critical $(P_5,H$)-free for $k\le 5$ when $H=C_5$~\cite{Hoang2015} and for all $k$ when:
%
%\begin{itemize}
%\item $H$=banner~\cite[Theorem 3(i)]{Brause2022}
%\item $H=K_{2,3}$ and $H=K_{1,4}$~\cite{Kaminski2019}
%\item $H=P_2+3P_1$~\cite{CameronHoangSawada2022}
%\item $H=P_3+2P_1$~\cite{AbuadasCameronHoangSawada2022}
%\item $H=\overline{P_5}$~\cite{Dhaliwal2017}
%\item $H=\overline{P_3+P_2}$ and $H=$gem~\cite{CaiGoedgebeurHuang2021}
%\end{itemize}
%END VERBATIM

\section{New infinite families of $k$-vertex-critical graphs}\label{sec:infcrit}
We are now ready to give the constructions for our infinite families of vertex-critical graphs. 
\begin{definition}
Fix $q\ge 1$ and $k\ge 3$. Let $\infcrit{q}{k}$ be a graph on vertex set $\{v_0,v_1,...,v_{kq}\}$ where, with each integer taken modulo $kq+1$, the neighbourhood of vertex $v_i$ is $$\{v_{i-1},v_{i+1}\}\cup\{v_{i+kj+m} : m=2,3,...k-1\text{ and } j=0,1,...,q-1\}.$$ 
\end{definition}

Examples to help visualize this definition can be found in Figure~\ref{fig:G36}
\setcounter{subfigure}{0}
\begin{center}
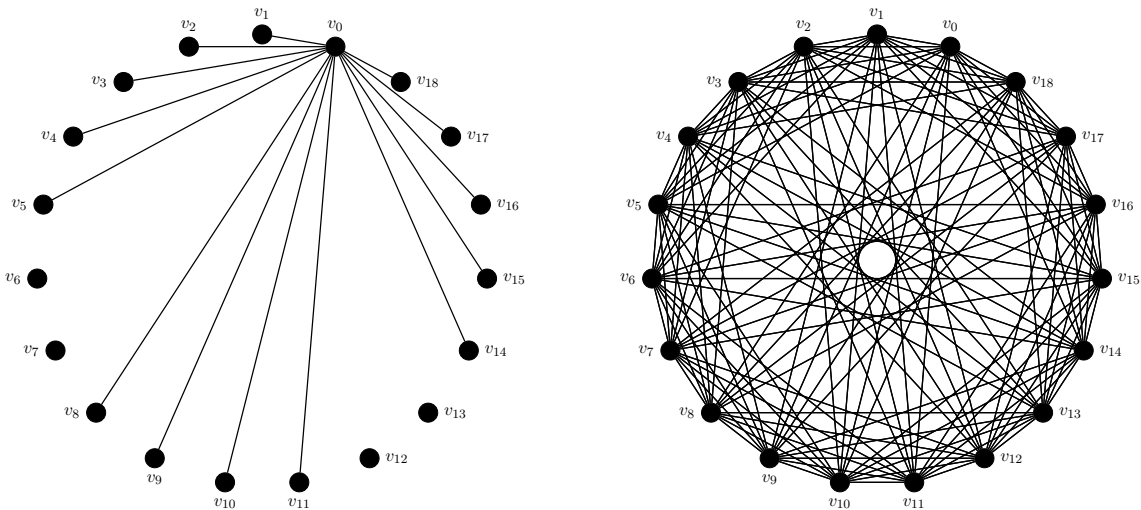
\begin{figure}[h]
\def\c{0.6}
\centering
\scalebox{\c}{
\subfigure{
\begin{tikzpicture}
\def\r{5}
\GraphInit[vstyle=Classic]
\Vertex[Lpos=90,L=\hbox{$v_0$},x=\r*0.32469946920468357cm,y=\r*0.9458172417006346cm]{v0}
\Vertex[Lpos=90,L=\hbox{$v_1$},x=\r*0.0cm,y=\r*1.0cm]{v1}
\Vertex[Lpos=90,L=\hbox{$v_2$},x=\r*-0.32469946920468357cm,y=\r*0.9458172417006346cm]{v2}
\Vertex[Lpos=180,L=\hbox{$v_3$},x=\r*-0.6142127126896678cm,y=\r*0.7891405093963936cm]{v3}
\Vertex[Lpos=180,L=\hbox{$v_4$},x=\r*-0.8371664782625287cm,y=\r*0.5469481581224268cm]{v4}
\Vertex[Lpos=180,L=\hbox{$v_5$},x=\r*-0.9694002659393304cm,y=\r*0.24548548714079912cm]{v5}
\Vertex[Lpos=180,L=\hbox{$v_6$},x=\r*-0.9965844930066698cm,y=\r*-0.08257934547233232cm]{v6}
\Vertex[Lpos=180,L=\hbox{$v_7$},x=\r*-0.9157733266550574cm,y=\r*-0.4016954246529694cm]{v7}
\Vertex[Lpos=180,L=\hbox{$v_8$},x=\r*-0.7357239106731317cm,y=\r*-0.6772815716257411cm]{v8}
\Vertex[Lpos=270,L=\hbox{$v_9$},x=\r*-0.4759473930370736cm,y=\r*-0.879473751206489cm]{v9}
\Vertex[Lpos=270,L=\hbox{$v_{10}$},x=\r*-0.16459459028073398cm,y=\r*-0.9863613034027223cm]{v10}
\Vertex[Lpos=270,L=\hbox{$v_{11}$},x=\r*0.16459459028073398cm,y=\r*-0.9863613034027223cm]{v11}
\Vertex[L=\hbox{$v_{12}$},x=\r*0.4759473930370736cm,y=\r*-0.879473751206489cm]{v12}
\Vertex[L=\hbox{$v_{13}$},x=\r*0.7357239106731317cm,y=\r*-0.6772815716257411cm]{v13}
\Vertex[L=\hbox{$v_{14}$},x=\r*0.9157733266550574cm,y=\r*-0.4016954246529694cm]{v14}
\Vertex[L=\hbox{$v_{15}$},x=\r*0.9965844930066698cm,y=\r*-0.08257934547233232cm]{v15}
\Vertex[L=\hbox{$v_{16}$},x=\r*0.9694002659393304cm,y=\r*0.24548548714079912cm]{v16}
\Vertex[L=\hbox{$v_{17}$},x=\r*0.8371664782625287cm,y=\r*0.5469481581224268cm]{v17}
\Vertex[L=\hbox{$v_{18}$},x=\r*0.6142127126896678cm,y=\r*0.7891405093963936cm]{v18}

\Edge[](v0)(v1)
\Edge[](v0)(v18)
\Edge[](v0)(v2)
\Edge[](v0)(v3)
\Edge[](v0)(v4)
\Edge[](v0)(v5)
\Edge[](v0)(v8)
\Edge[](v0)(v9)
\Edge[](v0)(v10)
\Edge[](v0)(v11)
\Edge[](v0)(v14)
\Edge[](v0)(v15)
\Edge[](v0)(v16)
\Edge[](v0)(v17)

%\uncover<6->{
%\AddVertexColor{red}{v0}
%\AddVertexColor{green}{v1}
%\AddVertexColor{blue}{v2}
%\AddVertexColor{yellow}{v3}
%\AddVertexColor{pink}{v4}
%\AddVertexColor{magenta}{v5}
%}
%
%\uncover<7->{
%\AddVertexColor{red}{v0, v12, v6}
%\AddVertexColor{green}{v13, v7, v1}
%\AddVertexColor{blue}{v8, v2, v14}
%\AddVertexColor{yellow}{v3, v15, v9}
%\AddVertexColor{pink}{v16, v10, v4}
%\AddVertexColor{magenta}{v11, v5, v17}
%}

%\uncover<8->{
%\AddVertexColor{orange}{v18}
%}
\end{tikzpicture}}}
\qquad
\subfigure{
\scalebox{\c}{
\begin{tikzpicture}
\def\r{5}
\GraphInit[vstyle=Classic]
\Vertex[Lpos=90,L=\hbox{$v_0$},x=\r*0.32469946920468357cm,y=\r*0.9458172417006346cm]{v0}
\Vertex[Lpos=90,L=\hbox{$v_1$},x=\r*0.0cm,y=\r*1.0cm]{v1}
\Vertex[Lpos=90,L=\hbox{$v_2$},x=\r*-0.32469946920468357cm,y=\r*0.9458172417006346cm]{v2}
\Vertex[Lpos=180,L=\hbox{$v_3$},x=\r*-0.6142127126896678cm,y=\r*0.7891405093963936cm]{v3}
\Vertex[Lpos=180,L=\hbox{$v_4$},x=\r*-0.8371664782625287cm,y=\r*0.5469481581224268cm]{v4}
\Vertex[Lpos=180,L=\hbox{$v_5$},x=\r*-0.9694002659393304cm,y=\r*0.24548548714079912cm]{v5}
\Vertex[Lpos=180,L=\hbox{$v_6$},x=\r*-0.9965844930066698cm,y=\r*-0.08257934547233232cm]{v6}
\Vertex[Lpos=180,L=\hbox{$v_7$},x=\r*-0.9157733266550574cm,y=\r*-0.4016954246529694cm]{v7}
\Vertex[Lpos=180,L=\hbox{$v_8$},x=\r*-0.7357239106731317cm,y=\r*-0.6772815716257411cm]{v8}
\Vertex[Lpos=270,L=\hbox{$v_9$},x=\r*-0.4759473930370736cm,y=\r*-0.879473751206489cm]{v9}
\Vertex[Lpos=270,L=\hbox{$v_{10}$},x=\r*-0.16459459028073398cm,y=\r*-0.9863613034027223cm]{v10}
\Vertex[Lpos=270,L=\hbox{$v_{11}$},x=\r*0.16459459028073398cm,y=\r*-0.9863613034027223cm]{v11}
\Vertex[L=\hbox{$v_{12}$},x=\r*0.4759473930370736cm,y=\r*-0.879473751206489cm]{v12}
\Vertex[L=\hbox{$v_{13}$},x=\r*0.7357239106731317cm,y=\r*-0.6772815716257411cm]{v13}
\Vertex[L=\hbox{$v_{14}$},x=\r*0.9157733266550574cm,y=\r*-0.4016954246529694cm]{v14}
\Vertex[L=\hbox{$v_{15}$},x=\r*0.9965844930066698cm,y=\r*-0.08257934547233232cm]{v15}
\Vertex[L=\hbox{$v_{16}$},x=\r*0.9694002659393304cm,y=\r*0.24548548714079912cm]{v16}
\Vertex[L=\hbox{$v_{17}$},x=\r*0.8371664782625287cm,y=\r*0.5469481581224268cm]{v17}
\Vertex[L=\hbox{$v_{18}$},x=\r*0.6142127126896678cm,y=\r*0.7891405093963936cm]{v18}
\begin{scope}
\Edge[](v0)(v1)
\Edge[](v0)(v18)
\Edge[](v0)(v2)
\Edge[](v0)(v3)
\Edge[](v0)(v4)
\Edge[](v0)(v5)
\Edge[](v0)(v8)
\Edge[](v0)(v9)
\Edge[](v0)(v10)
\Edge[](v0)(v11)
\Edge[](v0)(v14)
\Edge[](v0)(v15)
\Edge[](v0)(v16)
\Edge[](v0)(v17)
\Edge[](v1)(v2)
\Edge[](v1)(v0)
\Edge[](v1)(v3)
\Edge[](v1)(v4)
\Edge[](v1)(v5)
\Edge[](v1)(v6)
\Edge[](v1)(v9)
\Edge[](v1)(v10)
\Edge[](v1)(v11)
\Edge[](v1)(v12)
\Edge[](v1)(v15)
\Edge[](v1)(v16)
\Edge[](v1)(v17)
\Edge[](v1)(v18)
\Edge[](v2)(v3)
\Edge[](v2)(v1)
\Edge[](v2)(v4)
\Edge[](v2)(v5)
\Edge[](v2)(v6)
\Edge[](v2)(v7)
\Edge[](v2)(v10)
\Edge[](v2)(v11)
\Edge[](v2)(v12)
\Edge[](v2)(v13)
\Edge[](v2)(v16)
\Edge[](v2)(v17)
\Edge[](v2)(v18)
\Edge[](v2)(v0)
\Edge[](v3)(v4)
\Edge[](v3)(v2)
\Edge[](v3)(v5)
\Edge[](v3)(v6)
\Edge[](v3)(v7)
\Edge[](v3)(v8)
\Edge[](v3)(v11)
\Edge[](v3)(v12)
\Edge[](v3)(v13)
\Edge[](v3)(v14)
\Edge[](v3)(v17)
\Edge[](v3)(v18)
\Edge[](v3)(v0)
\Edge[](v3)(v1)
\Edge[](v4)(v5)
\Edge[](v4)(v3)
\Edge[](v4)(v6)
\Edge[](v4)(v7)
\Edge[](v4)(v8)
\Edge[](v4)(v9)
\Edge[](v4)(v12)
\Edge[](v4)(v13)
\Edge[](v4)(v14)
\Edge[](v4)(v15)
\Edge[](v4)(v18)
\Edge[](v4)(v0)
\Edge[](v4)(v1)
\Edge[](v4)(v2)
\Edge[](v5)(v6)
\Edge[](v5)(v4)
\Edge[](v5)(v7)
\Edge[](v5)(v8)
\Edge[](v5)(v9)
\Edge[](v5)(v10)
\Edge[](v5)(v13)
\Edge[](v5)(v14)
\Edge[](v5)(v15)
\Edge[](v5)(v16)
\Edge[](v5)(v0)
\Edge[](v5)(v1)
\Edge[](v5)(v2)
\Edge[](v5)(v3)
\Edge[](v6)(v7)
\Edge[](v6)(v5)
\Edge[](v6)(v8)
\Edge[](v6)(v9)
\Edge[](v6)(v10)
\Edge[](v6)(v11)
\Edge[](v6)(v14)
\Edge[](v6)(v15)
\Edge[](v6)(v16)
\Edge[](v6)(v17)
\Edge[](v6)(v1)
\Edge[](v6)(v2)
\Edge[](v6)(v3)
\Edge[](v6)(v4)
\Edge[](v7)(v8)
\Edge[](v7)(v6)
\Edge[](v7)(v9)
\Edge[](v7)(v10)
\Edge[](v7)(v11)
\Edge[](v7)(v12)
\Edge[](v7)(v15)
\Edge[](v7)(v16)
\Edge[](v7)(v17)
\Edge[](v7)(v18)
\Edge[](v7)(v2)
\Edge[](v7)(v3)
\Edge[](v7)(v4)
\Edge[](v7)(v5)
\Edge[](v8)(v9)
\Edge[](v8)(v7)
\Edge[](v8)(v10)
\Edge[](v8)(v11)
\Edge[](v8)(v12)
\Edge[](v8)(v13)
\Edge[](v8)(v16)
\Edge[](v8)(v17)
\Edge[](v8)(v18)
\Edge[](v8)(v0)
\Edge[](v8)(v3)
\Edge[](v8)(v4)
\Edge[](v8)(v5)
\Edge[](v8)(v6)
\Edge[](v9)(v10)
\Edge[](v9)(v8)
\Edge[](v9)(v11)
\Edge[](v9)(v12)
\Edge[](v9)(v13)
\Edge[](v9)(v14)
\Edge[](v9)(v17)
\Edge[](v9)(v18)
\Edge[](v9)(v0)
\Edge[](v9)(v1)
\Edge[](v9)(v4)
\Edge[](v9)(v5)
\Edge[](v9)(v6)
\Edge[](v9)(v7)
\Edge[](v10)(v11)
\Edge[](v10)(v9)
\Edge[](v10)(v12)
\Edge[](v10)(v13)
\Edge[](v10)(v14)
\Edge[](v10)(v15)
\Edge[](v10)(v18)
\Edge[](v10)(v0)
\Edge[](v10)(v1)
\Edge[](v10)(v2)
\Edge[](v10)(v5)
\Edge[](v10)(v6)
\Edge[](v10)(v7)
\Edge[](v10)(v8)
\Edge[](v11)(v12)
\Edge[](v11)(v10)
\Edge[](v11)(v13)
\Edge[](v11)(v14)
\Edge[](v11)(v15)
\Edge[](v11)(v16)
\Edge[](v11)(v0)
\Edge[](v11)(v1)
\Edge[](v11)(v2)
\Edge[](v11)(v3)
\Edge[](v11)(v6)
\Edge[](v11)(v7)
\Edge[](v11)(v8)
\Edge[](v11)(v9)
\Edge[](v12)(v13)
\Edge[](v12)(v11)
\Edge[](v12)(v14)
\Edge[](v12)(v15)
\Edge[](v12)(v16)
\Edge[](v12)(v17)
\Edge[](v12)(v1)
\Edge[](v12)(v2)
\Edge[](v12)(v3)
\Edge[](v12)(v4)
\Edge[](v12)(v7)
\Edge[](v12)(v8)
\Edge[](v12)(v9)
\Edge[](v12)(v10)
\Edge[](v13)(v14)
\Edge[](v13)(v12)
\Edge[](v13)(v15)
\Edge[](v13)(v16)
\Edge[](v13)(v17)
\Edge[](v13)(v18)
\Edge[](v13)(v2)
\Edge[](v13)(v3)
\Edge[](v13)(v4)
\Edge[](v13)(v5)
\Edge[](v13)(v8)
\Edge[](v13)(v9)
\Edge[](v13)(v10)
\Edge[](v13)(v11)
\Edge[](v14)(v15)
\Edge[](v14)(v13)
\Edge[](v14)(v16)
\Edge[](v14)(v17)
\Edge[](v14)(v18)
\Edge[](v14)(v0)
\Edge[](v14)(v3)
\Edge[](v14)(v4)
\Edge[](v14)(v5)
\Edge[](v14)(v6)
\Edge[](v14)(v9)
\Edge[](v14)(v10)
\Edge[](v14)(v11)
\Edge[](v14)(v12)
\Edge[](v15)(v16)
\Edge[](v15)(v14)
\Edge[](v15)(v17)
\Edge[](v15)(v18)
\Edge[](v15)(v0)
\Edge[](v15)(v1)
\Edge[](v15)(v4)
\Edge[](v15)(v5)
\Edge[](v15)(v6)
\Edge[](v15)(v7)
\Edge[](v15)(v10)
\Edge[](v15)(v11)
\Edge[](v15)(v12)
\Edge[](v15)(v13)
\Edge[](v16)(v17)
\Edge[](v16)(v15)
\Edge[](v16)(v18)
\Edge[](v16)(v0)
\Edge[](v16)(v1)
\Edge[](v16)(v2)
\Edge[](v16)(v5)
\Edge[](v16)(v6)
\Edge[](v16)(v7)
\Edge[](v16)(v8)
\Edge[](v16)(v11)
\Edge[](v16)(v12)
\Edge[](v16)(v13)
\Edge[](v16)(v14)
\Edge[](v17)(v18)
\Edge[](v17)(v16)
\Edge[](v17)(v0)
\Edge[](v17)(v1)
\Edge[](v17)(v2)
\Edge[](v17)(v3)
\Edge[](v17)(v6)
\Edge[](v17)(v7)
\Edge[](v17)(v8)
\Edge[](v17)(v9)
\Edge[](v17)(v12)
\Edge[](v17)(v13)
\Edge[](v17)(v14)
\Edge[](v17)(v15)
\Edge[](v18)(v0)
\Edge[](v18)(v17)
\Edge[](v18)(v1)
\Edge[](v18)(v2)
\Edge[](v18)(v3)
\Edge[](v18)(v4)
\Edge[](v18)(v7)
\Edge[](v18)(v8)
\Edge[](v18)(v9)
\Edge[](v18)(v10)
\Edge[](v18)(v13)
\Edge[](v18)(v14)
\Edge[](v18)(v15)
\Edge[](v18)(v16)

\end{scope}
\end{tikzpicture}}
}
\caption{The neighbourhood of $v_0$ in $G(3,6)$ (left) and the graph $G(3,6)$ (right).}\label{fig:G36}
\end{figure}
\end{center}

From this definition, it is clear that $\{\infcrit{q}{3}:q\ge 1\}=\{G_r:r\ge 1\}$ where $G_r$ is the infinite family $4$-vertex-critical $P_7$-free graph in~\cite{Chudnovsky4criticalconnected2020} and $\{\infcrit{q}{4}:q\ge 1\}=\{G_p:p\ge 1\}$ where $G_p$ is the infinite family of $5$-vertex-critical $2P_2$-free graphs in~\cite{Hoang2015}. Further, our new construction is very natural in the sense that $\infcrit{1}{k}\cong K_{k+1}$ and $\infcrit{2}{k}\cong\overline{C_{2k+1}}$ for all $k$, which include all prototypical $(k+1)$-vertex-critical $P_5$-free graphs except $C_5$ from the Strong Perfect Graph Theorem~\cite{Chudnovsky2006}.

For a given $q$ and $k$ and for $0\le i\le k-1$, let $V_i=\{v_t:t=\ i\pmod{k}\}$. It is clear that the $V_i$'s partition the vertex set of $\infcrit{q}{k}$. 

\begin{lemma}\label{lem:VisstablesetsinGqkexceptV0}
For $1\le i\le k$, $V_i$ is a stable set of $\infcrit{q}{k}$ and the only edge in $V_0$ is $v_0v_{qk}$.
\end{lemma}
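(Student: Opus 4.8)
The plan is to reduce the whole statement to one fact about differences of indices modulo $n:=kq+1$. First I would note that the adjacency rule defining $\infcrit{q}{k}$ is invariant under the shift $v_i\mapsto v_{i+1}$ (indices mod $n$); hence whether two vertices are adjacent depends only on the residue of the difference of their indices modulo $n$, the relation is genuinely symmetric, and it suffices to decide, for indices $0\le s<t\le kq$ with $s\equiv t\pmod k$, whether $v_t\in N(v_s)$.

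The engine of the argument is an elementary observation about the three blocks forming $N(v_s)$. The forward and backward neighbours contribute the shifts $1$ and $kq\equiv-1\pmod n$, while the remaining neighbours contribute the shifts $kj+m$ with $0\le j\le q-1$ and $2\le m\le k-1$; these last shifts all lie in the integer interval $[2,kq-1]$ and satisfy $kj+m\equiv m\not\equiv 0\pmod k$. Since $t-s$ lies in $[1,kq]$, an interval of fewer than $n$ consecutive integers, $t-s$ is congruent mod $n$ to at most one element of $[0,kq]$. Therefore $t-s\equiv kj+m\pmod n$ would force $t-s=kj+m$, contradicting $k\mid(t-s)$; likewise $t-s\equiv 1\pmod n$ would force $t-s=1$, impossible since $k\ge 2$ divides $t-s$; and $t-s\equiv-1\pmod n$ forces $t-s=kq$, i.e.\ $s=0$ and $t=kq$, which genuinely occurs because $kq\equiv 0\pmod k$.

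Putting this together, for $s<t$ with $s\equiv t\pmod k$ we obtain $v_s\sim v_t$ if and only if $\{s,t\}=\{0,kq\}$. As $0,kq\in V_0$, this is exactly the assertion that every $V_i$ with $i\not\equiv 0\pmod k$ is a stable set and that $V_0$ induces precisely the single edge $v_0v_{qk}$; I would then be done. (The range ``$1\le i\le k$'' in the statement should be read modulo $k$, since $V_k=V_0$.)

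I do not anticipate a genuine obstacle; the only care needed is the modular bookkeeping — verifying that the shift set $\{kj+m\}$ has largest element exactly $kq-1$ (attained at $j=q-1$, $m=k-1$) so that the interval-length argument upgrading ``congruent mod $n$'' to ``equal'' is valid, and keeping track of the wrap-around shift $kq\equiv-1$ responsible for the unique edge inside $V_0$.
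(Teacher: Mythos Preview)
Your proposal is correct. The core idea is the same as the paper's: show that no shift in the connection set is divisible by $k$ except for the wrap-around shift $-1\equiv kq$, which produces the single edge $v_0v_{qk}$ inside $V_0$. The paper argues this by fixing a vertex $v_i$, looking at each neighbour $v_{i+kj+m}$, and splitting into the cases $i+kj+m\le qk$ (no reduction, residue $m\bmod k$) and $i+kj+m>qk$ (one subtraction of $n$, residue $m-1\bmod k$); you instead invoke the circulant structure up front, work with the integer difference $t-s\in[1,kq]$, and use the interval bound to collapse ``congruent mod $n$'' to ``equal'', which cleanly avoids the wraparound case split. Both arguments are doing the same arithmetic; yours is somewhat more streamlined because the circulant viewpoint makes the reduction to the connection set explicit and handles symmetry and wraparound uniformly. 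One small point you should make explicit when writing it up: the claim that the relation is ``genuinely symmetric'' amounts to $S=-S\pmod n$ for the connection set $S$, which follows from $-(kj+m)\equiv k(q-1-j)+(k+1-m)\pmod{n}$ with $k+1-m\in\{2,\dots,k-1\}$; this is immediate but worth a line.
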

\begin{proof}
Fix $i\in\{0,1,2\ldots, qk\}$. Since it is clear that $v_i\nsim v_{i+1}$ and $v_{i}\nsim v_{i-1}$ with the exception of $v_0\sim v_{qk}$, we will focus on the subset of $N(v_i)$ defined by $\{v_{i+kj+m} : m=2,3,...k-1\text{ and } j=0,1,...,q-1\}$. Fix $m\in\{2,3,...k-1\}$ and $j=\{0,1,...,q-1\}$. The result now follows by considering  two cases to consider.\\

\noindent\textbf{Case 1:} Suppose $i+kj+m\le qk$. Then $i+kj+m\equiv i+m\pmod{k}$, and since $2\le m \le k-1$, $i\not\equiv i+m\pmod{k}$. Therefore, $v_h\not \in N(v_i)$ for all $h\equiv i\pmod{k}$.\\

\noindent\textbf{Case 2:} Suppose $i+kj+m>qk$, then $i+kj+m\equiv i+kj+m-1\pmod{qk+1}$, which is congruent to $i+m-1\pmod{k}$. Since $1\le m-1\le k-2$, it follows that $i\not\equiv i+m-1\pmod{k}$. Therefore, $v_h\not \in N(v_i)$ for all $h\equiv i\pmod{k}$.

\end{proof}

\begin{lemma}\label{lem:Gqkis2K2free}
For all $k\ge 4$ and $q\ge 1$, $\infcrit{q}{k}$ is $2K_2$-free.
\end{lemma}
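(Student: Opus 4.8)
To show $\infcrit{q}{k}$ is $2K_2$-free, I would suppose for contradiction that there are four vertices $v_a, v_b, v_c, v_d$ with $v_a \sim v_b$, $v_c \sim v_d$, and no other edges among the four. By the cyclic symmetry of the construction (the map $v_i \mapsto v_{i+1}$ is a graph automorphism, since the neighbourhood description is translation-invariant modulo $qk+1$), I may rotate so that one of the edges, say $v_av_b$, is ``canonical'' in some convenient form. The natural move is to understand exactly when $v_i \sim v_j$: writing $d = (j - i) \bmod (qk+1)$ and assuming $d \le (qk+1)/2$ by symmetry in swapping the roles, the adjacency condition from the definition should reduce to $d \in \{1\} \cup \{kj' + m : m \in \{2,\dots,k-1\},\ j' \in \{0,\dots,q-1\}\}$ — i.e., $v_i \sim v_j$ iff the circular distance is $1$, or lies in one of the $q$ ``blocks'' $[kj'+2, kj'+(k-1)]$. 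Equivalently, \emph{non}-adjacency of $v_i,v_j$ (for $i\ne j$) means the circular distance is $\equiv 0$ or $\equiv 1 \pmod k$ but is not $1$ itself — roughly, the two ``gap'' values on the cycle are each $\equiv 0$ or $1 \pmod k$.

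**Key steps, in order.** First I would nail down this adjacency-by-circular-distance reformulation cleanly (essentially repackaging the case analysis already done in Lemma~\ref{lem:VisstablesetsinGqkexceptV0}), because everything downstream depends on it. Second, WLOG rotate and relabel so that the first edge is $v_0 v_1$ (using the automorphism, plus possibly reversing orientation $v_i \mapsto v_{-i}$). Third, for the second edge $v_c v_d$ with both $v_c, v_d$ outside $\{v_0, v_1\}$: write $c, d \in \{2, 3, \dots, qk\}$. Non-adjacency of $v_c$ to $v_0$ forces $c \equiv 0$ or $1 \pmod k$ (with $c \ne 1$, automatically since $c\ge 2$; and the distance-$qk$ case $c=qk$ needs separate care because of the special edge $v_0 v_{qk}$). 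Similarly non-adjacency to $v_1$ forces $c-1 \equiv 0$ or $1 \pmod k$, i.e. $c \equiv 1$ or $2 \pmod k$. Combining, $c \equiv 1 \pmod k$; likewise $d \equiv 1 \pmod k$. Fourth — the crux — derive a contradiction from $v_c \sim v_d$ with $c \equiv d \equiv 1 \pmod k$: their circular distance is then $\equiv 0 \pmod k$ (if $c,d$ are both in the ``un-wrapped'' range) which by the reformulation means $v_c \nsim v_d$ unless the distance is exactly... but $0 \bmod k$ distances are never adjacent (the adjacent residues are $2,\dots,k-1$ and the special $1$). The only escape hatches are the wrap-around subtleties: when one of $c,d$ equals $qk$ (distance to something can shift by $1$ due to the $+1$ in $qk+1$), or when the ``distance'' between $v_c$ and $v_d$ wraps past $v_0$. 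I would handle these by the same modular bookkeeping as in Case 2 of Lemma~\ref{lem:VisstablesetsinGqkexceptV0}: passing the ``gap'' over the index $0$ shifts the residue by exactly $1$, so a distance that is $0 \bmod k$ becomes $1 \bmod k$ or vice versa — and a single residue value $1$ only gives adjacency when the actual distance is $1$, which a wrapped gap between two vertices $\ge 2$ apart cannot be. Either way we reach $v_c \nsim v_d$, the contradiction.

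**Main obstacle.** The genuinely fiddly part is the wrap-around / off-by-one caused by the modulus being $qk+1$ rather than a multiple of $k$, exactly the phenomenon isolated in Case 2 of the previous lemma. I expect to spend most of the real work making sure that in \emph{every} subcase — first edge straddling or not straddling vertex $0$, second edge's endpoints on the same side or opposite sides of the arc cut out by the first edge, and the exceptional index $qk$ — the residue arithmetic lands correctly, so that ``$c \equiv d \equiv 1 \pmod k$'' plus ``$v_c \sim v_d$'' is always contradictory. A clean way to organize this, which I would adopt, is to prove once and for all the statement: \emph{for $i \ne j$, $v_i \sim v_j$ if and only if, setting $\delta$ to be the length of one of the two arcs between $v_i$ and $v_j$ on the cycle $v_0 v_1 \cdots v_{qk} v_0$, we have $\delta \bmod k \in \{2, \dots, k-1\}$, or $\{i,j\} = \{\text{consecutive indices}\}$, or $\{i,j\}=\{0,qk\}$}; once that equivalence is in hand, the $2K_2$-freeness is a two-line residue count. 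I would also remark that this simultaneously re-proves Lemma~\ref{lem:Gqkis2K2free}'s hypothesis $k \ge 4$ is needed (for $k=3$ the blocks $[kj'+2,kj'+2]$ are singletons and the graph is $C_7$-like, which does contain $2K_2$).
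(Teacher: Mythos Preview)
Your reduction ``WLOG the first edge is $v_0v_1$'' is not valid, and this is where the argument breaks. The automorphism group you invoke --- rotations $v_i\mapsto v_{i+1}$ together with the reflection $v_i\mapsto v_{-i}$ --- is dihedral of order $2(qk+1)$, and it does \emph{not} act transitively on the edges of $\infcrit{q}{k}$: the graph is a circulant with connection set $S=\{\pm1\}\cup\{kj'+m:0\le j'\le q-1,\ 2\le m\le k-1\}$, and edges of different ``lengths'' lie in different dihedral orbits. So after rotating one endpoint to $v_0$, the partner $v_\ell$ can be any neighbour of $v_0$, not just $v_1$ or $v_{qk}$. Your residue computation in step~3 (forcing $c\equiv d\equiv 1\pmod k$) genuinely uses $\ell=1$: for general $\ell$, non-adjacency to $v_0$ still pins $c$ to residues $\{0,1\}\pmod k$, but non-adjacency to $v_\ell$ pins $c$ to a \emph{different} pair of residues, and the intersection need not collapse to a single stable class.

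The paper's proof confronts exactly this issue and resolves it by case analysis on $v_\ell$. It fixes only $v_0$ by vertex-transitivity, shows (as you do) that the other edge $xy$ must have one endpoint in $V_0\setminus\{v_0,v_{qk}\}$ and the other in $V_1\setminus\{v_1\}$, and then splits into three cases according to whether $v_\ell\in V_2\cup\cdots\cup V_{k-2}$, $v_\ell\in V_{k-1}$, or $v_\ell\in\{v_1,v_{qk}\}$; in each case it exhibits an unwanted adjacency between $v_\ell$ and one of $x,y$. Your write-up covers only this last case. Your adjacency-by-circular-distance reformulation is correct and could in principle be used to compress the casework, but you have not actually carried the argument through for $\ell\neq 1$; as written, the proposal has a real gap.
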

\begin{proof}
Let $G=\infcrit{q}{k}$ and suppose $G$ is not $2K_2$-free. By symmetry, we may suppose without loss of generality that $v_0$ belongs to an induced $2K_2$. Let $v_\ell, x, y\in V(G)$ such that $\{v_0,v_\ell,x,y\}$ induces a $2K_2$ in $G$ such that $v_0\sim v_\ell$ and $x\sim y$. It is clear by the definition of $G$ that $N(v_0)=\{v_1,v_{qk}\}\cup V_2\cup V_3\cup \cdots \cup V_{k-1}$.  %\bigcup\limits_{\ell=2}^{k-1}V_\ell$.
Therefore, if $\{x,y\}\not\subseteq (V_0\setminus\{v_0,v_{qk}\})\cup V_1$, then $v_0$ will be adjacent to $x$ or $y$, contradicting our assumption. Further, since $x\sim y$, we must have $\{x,y\}\subseteq (V_0\setminus\{v_0,v_{qk}\})\cup (V_1\setminus\{v_1\})$. Since both $V_1\setminus\{v_1\}$ and $V_0\setminus\{v_0,v_{qk}\}$ are stable sets from Lemma~\ref{lem:VisstablesetsinGqkexceptV0}, we may assume without loss of generality that  $x\in V_0\setminus\{v_0,v_{qk}\}$ and $y\in V_1\setminus\{v_1\}$. There are now only three cases to consider.\\

\noindent\textbf{Case 1:} Suppose $v_\ell\in V_2\cup V_3\cup\cdots\cup V_{k-2}$. 
Let $\ell'=\ell\pmod{k}$. Since $k\ge 4$, $2\le \ell'\le k-2$. So, for $v_{hk}\in V_0$ where $\ell<hk\le qk$, we have that $hk=\ell+k(h-1)+(k-\ell')$. Thus, $v_{hk}\in N(v_\ell)$ by the definition of $G$. For $v_{hk}\in V_0$ where $qk<hk<\ell$, we have that $hk=\ell'+k(h-1)+(k-\ell'+1)\pmod{qk+1}$, so again $v_{hk}\in N(v_\ell)$. It follows that $V_0\subseteq N(v_\ell)$ and, thus, $v_\ell\sim x$. This contradicts our assumption on the induced $2K_2$ in $G$.\\

\noindent\textbf{Case 2:} Suppose $v_\ell\in V_{k-1}$. 
We show that $V_{k-1}\subseteq N(y)$. Let $h\in \{0,\ldots, qk\}$ such that $y=v_h$. We know that $h=1\pmod{k}$ since $y=v_h\in V_1$. Let $h'\in\{0,\ldots, qk\}$ such that $h'=k-1 \pmod{k}$.  
If $h< h'$, then since $h+kj+k-2\equiv k-1\pmod{k}$, it follows that $v_h\sim v_{h'}$. 
If $h'< h$, then since $h+kj+k-1\pmod{qk+1} \equiv k-1\pmod{k}$, it follows that $v_h\sim v_{h'}$. Thus, $V_{k-1}\subseteq N(y)$, and   $y\sim v_\ell$ which again contradicts our assumption on the induced $2K_2$ in $G$.\\

\noindent\textbf{Case 3:} Suppose $v_\ell\in \{v_{qk},v_1\}$.
If $v_\ell=v_1$, then since $1+jk+m\pmod{qk+1}=1+jk+m$ for all $0\le j\le q-1$ and $2\le m\le k-1$, we can obtain all $v_h$ such that $v_h\in V_{0}$ in the set $\{v_{1+(q-1)j+(k-1)}:j=0,1,\ldots,q-1\}\subseteq N(v_1)$. Also, we can obtain all $v_h$ such that $v_h\in V_{k-1}$ in the set $\{v_{1+(q-1)j+(k-2)}:j=0,1,\ldots,q-1\}\subseteq N(v_1)$. Thus, $x\sim v_1=v_\ell$ which contradicts our assumption on the induced $2K_2$ in $G$. Similarly, for $v_\ell=v_{qk}$, we get that $y\sim v_\ell$ which again contradicts our assumption on the induced $2K_2$ in $G$.\\

All three cases together show that $G$ is $2K_2$-free.
\end{proof}

\begin{lemma}\label{lem:GqkisK3UP1free}
For all $k\ge 4$ and $q\ge 1$, $\infcrit{q}{k}$ is $(K_3+P_1)$-free.
\end{lemma}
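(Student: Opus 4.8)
I would prove the contrapositive-free, direct statement by assuming $\infcrit{q}{k}$ contains an induced $K_3+P_1$ and deriving a contradiction, exploiting the same vertex-transitivity (up to the anomaly at the edge $v_0v_{qk}$) that drove the proof of Lemma~\ref{lem:Gqkis2K2free}. Write $G=\infcrit{q}{k}$. Suppose $\{a,b,c,d\}\subseteq V(G)$ induces $K_3+P_1$ with $\{a,b,c\}$ a triangle and $d$ anticomplete to $\{a,b,c\}$. Since $d$ has a non-neighbour in the triangle and the $V_i$ partition the vertices, I would first use Lemma~\ref{lem:VisstablesetsinGqkexceptV0}: a triangle must either avoid $V_0$ entirely, or use the single edge $v_0v_{qk}$ together with a third vertex $w$ complete to both $v_0$ and $v_{qk}$. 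So there are two regimes for the triangle, and in either case the triangle occupies at least two (in fact, since each $V_i$ is stable, exactly three when $V_0$ is avoided) distinct classes $V_i$.

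**Key steps.** The heart of the argument is a ``domination'' claim analogous to the computations in Lemma~\ref{lem:Gqkis2K2free}: for a vertex $v_\ell$ lying in a class $V_{\ell'}$ with $2\le \ell'\le k-1$, the arithmetic $hk=\ell+k(h-1)+(k-\ell')$ (and its wrap-around variant mod $qk+1$) shows $v_\ell$ is complete to $V_0$, exactly as in Case~1 there; and more generally, by shifting indices, a vertex in class $V_i$ is complete to every class $V_j$ whose ``circular offset'' from $i$ lies in $\{2,3,\dots,k-1\}$ in at least one direction — which, since there are only $k$ classes, means each vertex misses (within other classes) only the two classes immediately adjacent to its own in the cyclic order $V_0,V_1,\dots,V_{k-1}$. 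I would make this precise as a lemma-within-the-proof: \emph{if $v_a\in V_i$ and $v_b\in V_j$ with $j\notin\{i-1,i,i+1\}\pmod k$, then $v_a\sim v_b$, except possibly for pairs straddling the wrap at $v_0,v_{qk}$}. Granting this, the triangle $\{a,b,c\}$, using three distinct classes, forces those three classes to be pairwise cyclically non-adjacent, which is only possible if $k\ge 6$ — but for $k=4,5$ it is already impossible, handling small cases, and for general $k$ the three classes of the triangle ``cover'' a wide arc. Then $d$, lying in some class $V_m$, must be cyclically adjacent to \emph{all three} triangle classes simultaneously (else $d$ is forced adjacent to one of $a,b,c$); since a class has only two cyclic neighbours, $d$'s class can be cyclically adjacent to at most two of the triangle's classes, contradiction. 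The boundary cases — when the triangle uses the $v_0v_{qk}$ edge, or when some of the relevant vertices are the ``special'' ones $v_1,v_{qk},v_0$ where the mod $qk+1$ wrap perturbs the offsets (cf.\ Case~3 of Lemma~\ref{lem:Gqkis2K2free}) — would be checked separately by the same explicit index arithmetic used there, essentially verifying that $v_1$ is complete to $V_0\cup V_{k-1}$ minus controlled exceptions and symmetrically for $v_{qk}$.

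**Main obstacle.** The genuine difficulty is bookkeeping the wrap-around at $qk+1$: the clean statement ``$v_a\sim v_b$ whenever their class-offset is in $\{2,\dots,k-1\}$'' holds cleanly only away from the seam, and near $v_0,v_{qk}$ one offset effectively shifts by $1$ (as in Case~2 and Case~3 of the previous lemma). So I expect to spend most of the proof isolating the handful of index ranges where $i+kj+m$ exceeds $qk$ and confirming that the neighbourhood still contains representatives of every class it needs to. I would organise this exactly as the authors did for $2K_2$: a symmetry reduction placing one special vertex (say $v_0$ or $v_{qk}$) into the configuration when the seam is relevant, then a small fixed number of cases on where the triangle's classes and $d$'s class sit relative to $\{0,k-1,1\}$, each resolved by the displayed-congruence technique. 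Once the ``each vertex is non-adjacent only to its two cyclic-neighbour classes'' principle is nailed down with its exceptions, the conclusion that no $K_3+P_1$ fits is a short counting argument on $\mathbb{Z}_k$.
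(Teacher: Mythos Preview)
Your plan has a genuine gap and is also far more complicated than necessary. The paper's proof is three lines: by the circulant symmetry, take the \emph{isolated} vertex of the $K_3+P_1$ to be $v_0$; then the triangle must live inside the non-neighbourhood of $v_0$, which is $(V_0\cup V_1)\setminus\{v_0,v_1,v_{qk}\}$; by Lemma~\ref{lem:VisstablesetsinGqkexceptV0} this set is a union of two stable sets, hence bipartite, hence triangle-free. Done. No case analysis on the seam, no arithmetic on offsets beyond what is already encoded in the statement $N(v_0)=\{v_1,v_{qk}\}\cup V_2\cup\cdots\cup V_{k-1}$.

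Your approach instead analyses the triangle first and the isolated vertex second, and the crucial intermediate step is wrong. You assert that the triangle's three classes ``must be pairwise cyclically non-adjacent,'' but this uses the \emph{converse} of your domination claim, which is false: $\{v_0,v_1,v_2\}$ is a triangle in $G(q,k)$ for every $k\ge 4$, occupying the cyclically consecutive classes $V_0,V_1,V_2$. Without that step, your final counting argument does not close: with the domination claim as you state it (offset not in $\{i-1,i,i+1\}$), the non-neighbours of $d\in V_m$ could a priori occupy all three of $V_{m-1},V_m,V_{m+1}$, which is exactly enough room for a triangle on three distinct classes. In fact your domination claim is not tight---a vertex in $V_i$ is complete to $V_{i-1}$ as well (check $v_0$: it is complete to $V_{k-1}$), so the non-neighbourhood of any vertex sits in only \emph{two} residue classes, not three. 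Once you tighten it to two classes the counting works, but at that point you have simply rediscovered the paper's observation that the non-neighbourhood is bipartite, with extra bookkeeping because you did not first pin the isolated vertex to $v_0$.
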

\begin{proof}
Let $G=\infcrit{q}{k}$ and suppose $G$ has an induced $K_3+P_1$. By the symmetry of $G$, we may assume that the isolated vertex in the $K_3+P_1$ is $v_0$.  Let $S=\{v_0,v_{i_1},v_{i_2},v_{i_3}\}$ be the subset of $V(G)$ that induces the $K_3+P_1$ with $v_0$ the isolated vertex. Since $v_0$ is anticomplete with $\{v_{i_1},v_{i_2},v_{i_3}\}$, it follows that $\{v_{i_1},v_{i_2},v_{i_3}\}\subset V_0\cup V_{1}\setminus \{v_{qk}\}$. However, since $V_0$ and $V_{1}\setminus \{v_{qk}\}$ stable sets by Lemma~\ref{lem:VisstablesetsinGqkexceptV0}, it follows that $G[V_0\cup V_{1}\setminus \{v_{qk}]\}$ is bipartite and therefore triangle-free. Therefore, $G$ is $(K_3+P_1)$-free.
\end{proof}

\begin{lemma}\label{lem:GqkisC5free}
For all $k\ge 5$ and $q\ge 1$, $\infcrit{q}{k}$ is $C_5$-free.
\end{lemma}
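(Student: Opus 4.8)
The plan is to argue directly by contradiction, exploiting the same symmetry we used in the previous three lemmas. Suppose $G=\infcrit{q}{k}$ contains an induced $C_5$; since $G$ is vertex-transitive (the map $v_i\mapsto v_{i+1}$ is an automorphism), we may assume $v_0$ lies on this $C_5$. Write the cycle as $v_0\,v_a\,v_b\,v_c\,v_d\,v_0$, so $v_0\sim v_a$, $v_a\sim v_b$, $v_b\sim v_c$, $v_c\sim v_d$, $v_d\sim v_0$, and the three non-edges are $v_0\nsim v_b$, $v_0\nsim v_c$, and $v_a\nsim v_c$, $v_a\nsim v_d$, $v_b\nsim v_d$. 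The two vertices not adjacent to $v_0$, namely $v_b$ and $v_c$, must lie in the non-neighbourhood of $v_0$, which by the neighbourhood description is $(V_0\setminus\{v_{qk}\})\cup V_1\setminus\{v_1\}$ together with $v_0$ itself removed; more precisely $\{v_b,v_c\}\subseteq \big(V_0\setminus\{v_0,v_{qk}\}\big)\cup\big(V_1\setminus\{v_1\}\big)$. Since $v_b\sim v_c$ and both $V_0\setminus\{v_0,v_{qk}\}$ and $V_1\setminus\{v_1\}$ are stable by Lemma~\ref{lem:VisstablesetsinGqkexceptV0}, we may assume (swapping $b,c$ and reversing the cycle if needed) that $v_b\in V_0\setminus\{v_0,v_{qk}\}$ and $v_c\in V_1\setminus\{v_1\}$.

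Next I would pin down where $v_a$ and $v_d$ can live. They are both neighbours of $v_0$, so each lies in $\{v_1,v_{qk}\}\cup V_2\cup\cdots\cup V_{k-1}$. The key leverage is the computation already carried out in the proof of Lemma~\ref{lem:Gqkis2K2free}: any vertex of $V_2\cup\cdots\cup V_{k-2}$ is complete to $V_0$ (Case 1 there), and $v_1$ is complete to $V_0\cup V_{k-1}$ while $v_{qk}$ is complete to $V_1\cup V_2$ (Case 3 there), and any vertex of $V_1$ is complete to $V_{k-1}$ (Case 2 there). Using these facts: $v_a\sim v_b\in V_0\setminus\{v_0,v_{qk}\}$ forces $v_a\notin V_2\cup\cdots\cup V_{k-2}$ and $v_a\neq v_{qk}$, so $v_a\in V_{k-1}\cup\{v_1\}$; symmetrically $v_d\sim v_c\in V_1\setminus\{v_1\}$ together with the fact that $v_1$ and every $V_i$ with $2\le i\le k-2$ behave as above (in particular one checks $v_{qk}$ is complete to $V_1\setminus\{v_1\}$ and vertices of $V_2\cup\cdots\cup V_{k-2}$ are complete to $V_1$ by the analogous modular computation) forces $v_d\in V_{k-1}\cup\{v_{qk}\}$. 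But now the non-edge $v_a\nsim v_d$ must be respected: if $v_a\in V_{k-1}$ and $v_d\in V_{k-1}$ this is automatic, but then we still need $v_a\nsim v_c$ with $v_a\in V_{k-1}$, $v_c\in V_1$, which contradicts the fact that $V_1$ is complete to $V_{k-1}$; and the cases $v_a=v_1$ or $v_d=v_{qk}$ are killed because $v_1$ is complete to $V_0\supseteq\{v_b\}$'s complement appropriately — more carefully, $v_a=v_1$ contradicts $v_a\nsim v_c$ only if... here one must be slightly careful, so I will enumerate the at most four combinations $(v_a,v_d)\in(V_{k-1}\cup\{v_1\})\times(V_{k-1}\cup\{v_{qk}\})$ and rule each out using the appropriate completeness relation among $V_1,V_{k-1},v_1,v_{qk}$.

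The main obstacle I anticipate is the bookkeeping in that final case analysis: making sure every needed ``$X$ is complete to $Y$'' statement is either already established in Lemma~\ref{lem:Gqkis2K2free}'s proof or follows from the same one-line modular computation (splitting on whether the relevant index exceeds $qk$), and making sure I have correctly identified which of the five $C_5$-vertices are non-adjacent to $v_0$ — the two at ``distance two'' on the cycle — and hence must sit in $V_0\cup V_1$. A secondary subtlety is that $k\ge 5$ is used precisely to guarantee $V_2\cup\cdots\cup V_{k-2}$ is nonempty and, more importantly, that $V_1$ and $V_{k-1}$ are distinct classes (for $k=4$ one has $V_1=V_{k-1}$ and the argument structure changes, which is why $\infcrit{q}{4}$ genuinely does contain induced $C_5$'s — indeed $\infcrit{2}{4}\cong\overline{C_9}$, which is not $C_5$-free); I would flag this dependence explicitly. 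Once the neighbourhood/completeness lemmas are in hand, each of the finitely many cases closes in one line, so modulo careful case-tracking the proof is short.
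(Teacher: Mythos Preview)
Your overall strategy is exactly the paper's: put $v_0$ on the cycle, force its two non-neighbours on the cycle into $V_0\setminus\{v_0,v_{qk}\}$ and $V_1\setminus\{v_1\}$ using Lemma~\ref{lem:VisstablesetsinGqkexceptV0}, then pin down the other two cycle vertices via the completeness relations from the proof of Lemma~\ref{lem:Gqkis2K2free} and extract a contradiction that only closes when $k\ge 5$.

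There is, however, a concrete slip in your pinning-down step. You write that ``$v_a\sim v_b\in V_0\setminus\{v_0,v_{qk}\}$ forces $v_a\notin V_2\cup\cdots\cup V_{k-2}$,'' but this is backwards: an \emph{edge} from $v_a$ into $V_0$ is entirely compatible with $v_a\in V_2\cup\cdots\cup V_{k-2}$, since those classes are complete to $V_0$. The constraints on $v_a$ and $v_d$ must come from the \emph{non}-edges of the $C_5$. Use $v_d\nsim v_b$ (with $v_b\in V_0$) to exclude $v_d$ from $V_2\cup\cdots\cup V_{k-2}\cup\{v_1\}$, giving $v_d\in V_{k-1}\cup\{v_{qk}\}$ as you state; but for $v_a$ use $v_a\nsim v_c$ (with $v_c\in V_1$) together with the mirror of the Case~1/Case~3 computations to get $v_a\in V_2\cup\{v_1\}$, \emph{not} $V_{k-1}\cup\{v_1\}$. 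With the corrected placements the endgame runs as in the paper: the remaining non-edges whittle $v_a$ down to $V_{k-2}$ (or symmetrically $V_2$), and then the contradiction with $v_a\nsim v_c$ (or $v_d\nsim v_b$) uses exactly $k-2\ge 3$, i.e.\ $k\ge 5$. Your instinct that the four boundary combinations involving $v_1,v_{qk}$ need separate one-line checks is correct and is something the paper glosses over.
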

\begin{proof}
Let $k\ge 5$, $q\ge 1$, and $G=\infcrit{q}{k}$. Suppose by way of contradiction that $G$ contains an induced $C_5$. Without loss of generality suppose $v_0$ is on an induced $C_5$ in $G$ and let $\{v_0,v_{i_1},v_{i_2},v_{i_3},v_{i_4}\}$ induce a $C_5$ in $G$ such that $v_0\sim v_{i_1}$, $v_0\sim v_{i_4}$, and $v_{i_j}\sim v_{i_{j+1}}$ for $j=1,2,3$. By similar arguments to the proof of Lemma~\ref{lem:Gqkis2K2free}, we must have, without loss of generality, $v_{i_2}\in V_0\setminus\{v_0,v_{qk}\}$ and $v_{i_3}\in V_1\setminus\{v_1\}$. Further, since $N(v_0)=\{v_1,v_{qk}\}\cup V_2\cup V_3\cup \cdots \cup V_{k-1}$, we must have $\{v_{i_4},v_{i_5}\}\subseteq V_2\cup V_3\cup \cdots \cup V_{k-1}$. By a similar argument to that of Case 1 of the proof of Lemma~\ref{lem:Gqkis2K2free} and since $v_1\sim v_{qk}$, we must have $v_{i_4}\in V_{k-1}$ (or else $v_{i_2}\sim v_{i_4}$) and $v_{i_3}\in V_j$ for some $2\le j\le k-2$. Further, $j=k-2$, or else $v_{i_1}\sim v_{i_4}$. But now since $k\ge 5$, and therefore $k-2\ge 3$, we have $v_{i_3}\sim v_{i_1}$. This contradicts our assumption on the induced $C_5$. Therefore, $G$ is $C_5$-free.
\end{proof}

Note our proof of the following lemma is adapted directly from Theorem 2.4 of~\cite{Hoang2015} where it was shown that $\infcrit{q,4}$ (under the name $G_p$) is $4$-vertex-critical for all $q$.
\begin{lemma}\label{lem:Gqkiskcritical}
For all $k\ge 3$ and $q\ge 1$, $\infcrit{q}{k}$ is $(k+1)$-vertex-critical.
\end{lemma}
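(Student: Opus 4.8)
The plan is to prove two things: that $\chi(\infcrit{q}{k}) = k+1$, and that deleting any vertex drops the chromatic number to $k$. By the vertex-transitivity of $\infcrit{q}{k}$ (cyclic symmetry $v_i \mapsto v_{i+1}$), the second part reduces to showing $\chi(\infcrit{q}{k} - v_0) \le k$, so there are really only three things to establish: a valid $(k+1)$-colouring of the whole graph, a valid $k$-colouring of $\infcrit{q}{k} - v_0$, and a lower bound $\chi(\infcrit{q}{k}) \ge k+1$.

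First I would handle the easy colourings. For the $(k+1)$-colouring, I claim assigning vertex $v_i$ the colour $i \bmod (k+1)$ works, or more carefully, I would use the near-equitable partition: since $|V(G)| = kq+1$, there is a natural colouring coming from partitioning $\{0,1,\dots,kq\}$ into colour classes. Actually the cleanest approach follows Ho\`ang et al.: use the partition classes $V_0, V_1, \dots, V_{k-1}$ from Lemma~\ref{lem:VisstablesetsinGqkexceptV0}. Each $V_i$ with $i \ge 1$ is a stable set, and $V_0$ induces a single edge $v_0 v_{qk}$; so colour $V_1, \dots, V_{k-1}$ with colours $1, \dots, k-1$, colour $V_0 \setminus \{v_{qk}\}$ with colour $0$, and give $v_{qk}$ the extra colour $k$ — that is a proper $(k+1)$-colouring. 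For $\infcrit{q}{k} - v_0$: now $V_0 \setminus \{v_0\}$ is a genuine stable set (the only edge of $G[V_0]$ used $v_0$), so colouring $V_0 \setminus \{v_0\}, V_1, \dots, V_{k-1}$ with $k$ colours is proper, giving $\chi(\infcrit{q}{k} - v_0) \le k$.

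The main obstacle is the lower bound $\chi(\infcrit{q}{k}) \ge k+1$. The approach I would take, adapting Theorem~2.4 of~\cite{Hoang2015}, is to suppose for contradiction that $c$ is a proper $k$-colouring and derive a contradiction from a counting/structural argument on the circular structure. The key observations: (i) any $k+1$ consecutive vertices $v_i, v_{i+1}, \dots, v_{i+k}$ contain a clique of size $k$ — indeed from the neighbourhood definition, within a block of $k$ consecutive indices all pairs at distance $\ge 2$ are adjacent, and together with the path-edges one checks $\{v_i,\dots,v_{i+k-1}\}$ or a similar window is a clique (this is exactly why $\infcrit{1}{k} = K_{k+1}$ and $\infcrit{2}{k} = \overline{C_{2k+1}}$); so any $k$ consecutive vertices must all receive distinct colours under $c$. (ii) Therefore $c(v_{i+k}) = c(v_i)$ for every $i$ (since $v_{i+1},\dots,v_{i+k}$ are distinctly coloured and $v_i, \dots, v_{i+k-1}$ are too, the colour of $v_{i+k}$ is forced to equal that of $v_i$). (iii) Iterating around the cycle of length $kq+1$: $c(v_i) = c(v_{i+k}) = c(v_{i+2k}) = \cdots$, and since $\gcd(k, kq+1) = 1$, this forces $c$ to be constant on all of $V(G)$, contradicting that $G$ has edges (e.g. $v_0 \sim v_1$). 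I would need to verify claim (i) carefully — that $k$ consecutive vertices really do form a clique — by direct appeal to the neighbourhood formula: for $0 \le a < b \le k-1$ with $b - a \ge 2$ we have $v_{i+b} \in N(v_{i+a})$ taking $m = b-a$, $j=0$, and for $b - a = 1$ it is a path edge; the only subtlety is wrap-around when $i + k - 1 > qk$, which is handled exactly as in the two-case analysis of Lemma~\ref{lem:VisstablesetsinGqkexceptV0}, and I should check the single-edge exception $v_0 v_{qk}$ doesn't interfere (it only helps, adding an edge).

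Putting it together: the $(k+1)$-colouring and the deletion colouring give $\chi(G) \le k+1$ and $\chi(G - v) \le k$ for all $v$; the clique-propagation argument gives $\chi(G) \ge k+1$; hence $\chi(G) = k+1$ and $G$ is $(k+1)$-vertex-critical. I expect the clique-windows claim and the careful modular bookkeeping for the wrap-around indices to be the only genuinely fiddly part; everything else is short.
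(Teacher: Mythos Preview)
Your proposal is correct and follows essentially the same approach as the paper: both exploit that any $k$ consecutive vertices induce a $K_k$, forcing a $k$-colouring to be periodic with period $k$, and both use the partition $V_0,\dots,V_{k-1}$ from Lemma~\ref{lem:VisstablesetsinGqkexceptV0} for the upper bound and the deletion colouring. The only cosmetic difference is that the paper propagates the forced colouring linearly from $v_0$ up to $v_{qk-1}$ and then observes $N(v_{qk})$ already contains all $k$ colours, whereas you phrase the contradiction via $\gcd(k,qk+1)=1$ forcing a constant colouring; these are two ways of reading off the same incompatibility between period $k$ and cycle length $qk+1$.
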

\begin{proof}
Let $G=\infcrit{q}{k}$. Note that for all $0\le i\le (q-1)k+1$, the set $\{v_{i+j}:j=0,1,\ldots,k-1\}$ induces a $K_{k}$. Thus, $\chi(G)\ge k$. Suppose $G$ is $k$-colourable, and without loss of generality, let vertices $v_i$ be assigned colour $i$ for $i=0,1,\ldots, k-1$. We now must have vertex $v_j$ be assigned colour $j\pmod{k}$ for all $k\le j\le qk-1$. But now $N(v_{qk})$ contains all $k$ colours. So $G$ is not $k$-colourable. From Lemma~\ref{lem:VisstablesetsinGqkexceptV0}, however, this partial colouring is valid and we may give vertex $v_{qk}$ colour $k$ to get that $\chi(G)=k+1$. Since $v_{qk}$ is the only vertex with colour $k$, it follows by the symmetry of $G$ that $G$ is $(k+1)$-vertex-critical.
\end{proof}

\begin{theorem}
There are infinitely many $k$-vertex-critical $(2P_2,K_3+P_1,C_5)$-free graphs for all $k\ge 6$.
\end{theorem}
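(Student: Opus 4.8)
The plan is to simply assemble the preceding lemmas, applied to the family $\{\infcrit{q}{k-1} : q\ge 1\}$ for a fixed $k\ge 6$. First I would observe that, since $k\ge 6$, we have $k-1\ge 5\ge 4$, so all four of the hypotheses ``$k\ge 4$'', ``$k\ge 4$'', ``$k\ge 5$'', ``$k\ge 3$'' appearing in Lemmas~\ref{lem:Gqkis2K2free}, \ref{lem:GqkisK3UP1free}, \ref{lem:GqkisC5free}, and \ref{lem:Gqkiskcritical} are met with the parameter $k-1$ in place of $k$. Hence for every $q\ge 1$ the graph $\infcrit{q}{k-1}$ is simultaneously $2P_2$-free (note $2P_2 = 2K_2$), $(K_3+P_1)$-free, $C_5$-free, and $\bigl((k-1)+1\bigr)$-vertex-critical, i.e.\ $k$-vertex-critical.

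Next I would argue these graphs are pairwise non-isomorphic, so that the family is genuinely infinite: by the definition of $\infcrit{q}{k-1}$, its vertex set is $\{v_0,\dots,v_{(k-1)q}\}$, which has exactly $(k-1)q+1$ elements, a strictly increasing function of $q$ for fixed $k$. Thus $\infcrit{q}{k-1}\not\cong\infcrit{q'}{k-1}$ whenever $q\neq q'$, giving infinitely many distinct $k$-vertex-critical $(2P_2,K_3+P_1,C_5)$-free graphs. Since $k\ge 6$ was arbitrary, the theorem follows.

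I do not anticipate any real obstacle here: the content has all been established in the lemmas, and the only point requiring a moment's care is the index shift—to obtain a $k$-vertex-critical graph one must feed the parameter $k-1$ into the construction, and it is precisely the $C_5$-freeness lemma (which needs the construction parameter to be at least $5$) that forces the bound $k\ge 6$ rather than $k\ge 5$. The counting argument for non-isomorphism is immediate from the vertex count. If one wanted to be slightly more careful, one could also remark that $\infcrit{q}{k-1}$ is connected (it contains the edge $v_0v_1$ and, more strongly, the $K_{k-1}$'s overlap consecutively), but connectivity is not needed for the statement as phrased.
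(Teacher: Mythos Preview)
Your proposal is correct and follows essentially the same approach as the paper, which simply states that the result follows from Lemmas~\ref{lem:Gqkis2K2free}--\ref{lem:Gqkiskcritical}. You have in fact been more explicit than the paper by spelling out the index shift to $k-1$ and the non-isomorphism argument via vertex count.
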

\begin{proof}
The proof follows from Lemmas~\ref{lem:Gqkis2K2free}-\ref{lem:Gqkiskcritical}
\end{proof}

\begin{corollary}\label{cor:P5C5free}
There are infinitely many $k$-vertex-critical $(P_5,C_5)$-free graphs for all $k\ge 6$.
\end{corollary}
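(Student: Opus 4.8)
\textbf{Proof proposal for Corollary~\ref{cor:P5C5free}.} The plan is to deduce this immediately from the preceding Theorem, which already supplies infinitely many $k$-vertex-critical $(2P_2,K_3+P_1,C_5)$-free graphs for every $k\ge 6$ (concretely, the family $\{\infcrit{q}{k}:q\ge 1\}$, which is infinite because the graphs have $kq+1$ vertices). The only thing that needs to be observed is the containment relation between the forbidden subgraphs: a graph that is $2P_2$-free is automatically $P_5$-free, since $P_5$ contains an induced $2P_2$ (taking the two endpoints of $P_5$ together with the two vertices adjacent to them, i.e. the first two and last two vertices of the path, yields an induced $2P_2$). Hence the class of $(2P_2,K_3+P_1,C_5)$-free graphs is a subclass of the class of $(P_5,C_5)$-free graphs.

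Thus the key step is: for each fixed $k\ge 6$, invoke the Theorem to obtain an infinite set $\mathcal{F}_k$ of $k$-vertex-critical $(2P_2,K_3+P_1,C_5)$-free graphs, and then note that every $G\in\mathcal{F}_k$ is in particular $(P_5,C_5)$-free by the containment above, while the property of being $k$-vertex-critical is unchanged (it does not depend on which superclass we regard $G$ as living in). Therefore $\mathcal{F}_k$ is also an infinite set of $k$-vertex-critical $(P_5,C_5)$-free graphs, which is exactly the claim.

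There is no real obstacle here; the statement is a formal weakening of the Theorem. The only point requiring (a trivial amount of) care is verifying the induced-subgraph containment $2P_2 \le_{\text{ind}} P_5$ so that "$2P_2$-free" implies "$P_5$-free"; once that is in hand the corollary is immediate. One could alternatively phrase the whole thing in a single sentence, but spelling out the containment makes the logical step transparent.

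\begin{proof}
By the Theorem, for every $k\ge 6$ there are infinitely many $k$-vertex-critical $(2P_2,K_3+P_1,C_5)$-free graphs. Since $P_5$ contains an induced $2P_2$ (on its first two and last two vertices), every $2P_2$-free graph is $P_5$-free, so every $(2P_2,K_3+P_1,C_5)$-free graph is $(P_5,C_5)$-free. Hence there are infinitely many $k$-vertex-critical $(P_5,C_5)$-free graphs for all $k\ge 6$.
\end{proof}
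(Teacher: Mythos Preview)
Your proof is correct and follows exactly the approach the paper intends: the corollary is stated without proof, being an immediate consequence of the preceding Theorem via the observation that $2P_2$-free implies $P_5$-free.
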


\section{State of $(P_5,H)$-free graphs when $H$ is of order 5}\label{sec:stateoftheart}

In this section we detail in a table whether there are only finitely many or infinitely many $k$-vertex-critical $(P_5,H)$-free graphs for all nonisomorphic graphs $H$ of order $5$. This is to make progress on the open problem raised in~\cite{KCameron2021}. For the names of graphs we are mostly following \url{https://www.graphclasses.org/smallgraphs.html#nodes5} with the notable exception that we use $+$ to denote disjoint union.

\begin{center} 
\renewcommand\arraystretch{1.2}
\def\c{0.2}
\begin{longtable}{|c|c|c|c|} 
\hline
\textbf{Graph} & \textbf{Graph name} & \textbf{Finite/Infinite} & \textbf{Reference}\\ \hline
\scalebox{\c}{ \begin{tikzpicture}
\GraphInit[vstyle=Classic]
\Vertex[L=\hbox{$0$},x=0.0cm,y=2.5cm]{v0}
\Vertex[L=\hbox{$1$},x=1.25cm,y=2.5cm]{v1}
\Vertex[L=\hbox{$2$},x=2.5cm,y=2.5cm]{v2}
\Vertex[L=\hbox{$3$},x=3.75cm,y=2.5cm]{v3}
\Vertex[L=\hbox{$4$},x=5.0cm,y=2.5cm]{v4}
\end{tikzpicture} } & $\overline{K_5}$ & finite & Ramsey's Theorem~\cite{Ramsey} \\ \hline
\scalebox{\c}{ \begin{tikzpicture}
\GraphInit[vstyle=Classic]
\Vertex[L=\hbox{$0$},x=0.0cm,y=2.5cm]{v0}
\Vertex[L=\hbox{$1$},x=1.25cm,y=2.5cm]{v1}
\Vertex[L=\hbox{$2$},x=2.5cm,y=2.5cm]{v2}
\Vertex[L=\hbox{$3$},x=3.75cm,y=2.5cm]{v3}
\Vertex[L=\hbox{$4$},x=5.0cm,y=2.5cm]{v4}
\Edge[](v0)(v1)
\end{tikzpicture} } & $P_2+3P_1$ & finite & \cite{CameronHoangSawada2022} \\ \hline
\scalebox{\c}{ \begin{tikzpicture}
\GraphInit[vstyle=Classic]
\Vertex[L=\hbox{$0$},x=0.0cm,y=2.5cm]{v0}
\Vertex[L=\hbox{$1$},x=1.25cm,y=2.5cm]{v1}
\Vertex[L=\hbox{$2$},x=2.5cm,y=2.5cm]{v2}
\Vertex[L=\hbox{$3$},x=3.75cm,y=2.5cm]{v3}
\Vertex[L=\hbox{$4$},x=5.0cm,y=2.5cm]{v4}
\Edge[](v0)(v1)
\Edge[](v1)(v2)
\end{tikzpicture} } & $P_3+2P_1$ & finite & \cite{AbuadasCameronHoangSawada2022} \\ \hline
\scalebox{\c}{ \begin{tikzpicture}
\GraphInit[vstyle=Classic]
\Vertex[L=\hbox{$0$},x=3.3333cm,y=0.0845cm]{v0}
\Vertex[L=\hbox{$1$},x=1.5866cm,y=5.0cm]{v1}
\Vertex[L=\hbox{$2$},x=0.0cm,y=0.0cm]{v2}
\Vertex[L=\hbox{$3$},x=5.0cm,y=1.704cm]{v3}
\Vertex[L=\hbox{$4$},x=1.6444cm,y=1.7315cm]{v4}
\Edge[](v0)(v4)
\Edge[](v1)(v4)
\Edge[](v2)(v4)
\end{tikzpicture} } & claw$+P_1$ & unknown & N/A \\ \hline
\scalebox{\c}{ \begin{tikzpicture}
\GraphInit[vstyle=Classic]
\Vertex[L=\hbox{$0$},x=3.3294cm,y=5.0cm]{v0}
\Vertex[L=\hbox{$1$},x=1.7268cm,y=0.0cm]{v1}
\Vertex[L=\hbox{$2$},x=5.0cm,y=1.6608cm]{v2}
\Vertex[L=\hbox{$3$},x=0.0cm,y=3.2983cm]{v3}
\Vertex[L=\hbox{$4$},x=2.5861cm,y=2.5358cm]{v4}
\Edge[](v0)(v4)
\Edge[](v1)(v4)
\Edge[](v2)(v4)
\Edge[](v3)(v4)
\end{tikzpicture} } & $K_{1,4}$ & finite & \cite{Kaminski2019} (see also \cite[Theorem~1]{LozinRautenbach2003}) \\ \hline
\scalebox{\c}{ \begin{tikzpicture}
\GraphInit[vstyle=Classic]
\Vertex[L=\hbox{$0$},x=0.0cm,y=5.0cm]{v0}
\Vertex[L=\hbox{$1$},x=4.5cm,y=0.0cm]{v1}
\Vertex[L=\hbox{$2$},x=2.5cm,y=2.5cm]{v2}
\Vertex[L=\hbox{$3$},x=0.0cm,y=.0cm]{v3}
\Vertex[L=\hbox{$4$},x=4.5cm,y=5.0cm]{v4}
\Edge[](v0)(v3)
\Edge[](v1)(v4)
\end{tikzpicture} } & $2K_2+P_1$ & infinite & Contains $2K_2$~\cite{Hoang2015} \\ \hline
\scalebox{\c}{ \begin{tikzpicture}
\GraphInit[vstyle=Classic]
\Vertex[L=\hbox{$0$},x=2.2746cm,y=1.584cm]{v0}
\Vertex[L=\hbox{$1$},x=0.0cm,y=5.0cm]{v1}
\Vertex[L=\hbox{$2$},x=5.0cm,y=2.4925cm]{v2}
\Vertex[L=\hbox{$3$},x=3.3333cm,y=0.0cm]{v3}
\Vertex[L=\hbox{$4$},x=1.0737cm,y=3.386cm]{v4}
\Edge[](v0)(v3)
\Edge[](v0)(v4)
\Edge[](v1)(v4)
\end{tikzpicture} } & $P_4+P_1$ & unknown & N/A \\ \hline
\scalebox{\c}{ \begin{tikzpicture}
\GraphInit[vstyle=Classic]
\Vertex[L=\hbox{$0$},x=4.4191cm,y=5.0cm]{v0}
\Vertex[L=\hbox{$1$},x=4.0083cm,y=4.3383cm]{v1}
\Vertex[L=\hbox{$2$},x=0.0cm,y=0.646cm]{v2}
\Vertex[L=\hbox{$3$},x=5.0cm,y=0.0cm]{v3}
\Vertex[L=\hbox{$4$},x=2.0283cm,y=2.5156cm]{v4}
\Edge[](v0)(v3)
\Edge[](v1)(v4)
\Edge[](v2)(v4)
\end{tikzpicture} } & $P_3+P_2$ & infinite & Contains $2K_2$~\cite{Hoang2015} \\ \hline
\scalebox{\c}{ \begin{tikzpicture}
\GraphInit[vstyle=Classic]
\Vertex[L=\hbox{$0$},x=2.3205cm,y=1.8117cm]{v0}
\Vertex[L=\hbox{$1$},x=3.6603cm,y=2.2706cm]{v1}
\Vertex[L=\hbox{$2$},x=5.0cm,y=2.2706cm]{v2}
\Vertex[L=\hbox{$3$},x=0.0cm,y=0.0cm]{v3}
\Vertex[L=\hbox{$4$},x=0.4202cm,y=5.0cm]{v4}
\Edge[](v0)(v3)
\Edge[](v0)(v4)
\Edge[](v3)(v4)
\end{tikzpicture} } & $K_3+2P_1$ & infinite & Contains $K_3+P_1$~\cite{KCameron2021,Hoang2015} \\ \hline
\scalebox{\c}{ \begin{tikzpicture}
\GraphInit[vstyle=Classic]
\Vertex[L=\hbox{$0$},x=1.419cm,y=3.5997cm]{v0}
\Vertex[L=\hbox{$1$},x=5.0cm,y=2.3527cm]{v1}
\Vertex[L=\hbox{$2$},x=2.817cm,y=0.0cm]{v2}
\Vertex[L=\hbox{$3$},x=0.0cm,y=5.0cm]{v3}
\Vertex[L=\hbox{$4$},x=2.987cm,y=2.0359cm]{v4}
\Edge[](v0)(v3)
\Edge[](v0)(v4)
\Edge[](v1)(v4)
\Edge[](v2)(v4)
\end{tikzpicture} } & chair & finite $k\le 5$, unknown $k\ge 6$ & \cite{HuangLi2023} \\ \hline
\scalebox{\c}{ \begin{tikzpicture}
\GraphInit[vstyle=Classic]
\Vertex[L=\hbox{$0$},x=0.0cm,y=4.8144cm]{v0}
\Vertex[L=\hbox{$1$},x=2.2989cm,y=0.0cm]{v1}
\Vertex[L=\hbox{$2$},x=5.0cm,y=3.1071cm]{v2}
\Vertex[L=\hbox{$3$},x=3.3333cm,y=5.0cm]{v3}
\Vertex[L=\hbox{$4$},x=1.9629cm,y=2.614cm]{v4}
\Edge[](v0)(v3)
\Edge[](v0)(v4)
\Edge[](v1)(v4)
\Edge[](v3)(v4)
\end{tikzpicture} } & co-dart & infinite & Contains $K_3+P_1$~\cite{KCameron2021,Hoang2015}\\ \hline
\scalebox{\c}{ \begin{tikzpicture}
\GraphInit[vstyle=Classic]
\Vertex[L=\hbox{$0$},x=3.8766cm,y=0.0cm]{v0}
\Vertex[L=\hbox{$1$},x=5.0cm,y=5.0cm]{v1}
\Vertex[L=\hbox{$2$},x=0.0cm,y=4.7704cm]{v2}
\Vertex[L=\hbox{$3$},x=1.6339cm,y=0.1528cm]{v3}
\Vertex[L=\hbox{$4$},x=2.6166cm,y=2.8152cm]{v4}
\Edge[](v0)(v3)
\Edge[](v0)(v4)
\Edge[](v1)(v4)
\Edge[](v2)(v4)
\Edge[](v3)(v4)
\end{tikzpicture} } & $\overline{\text{diamond}+P_1}$ & unknown & N/A \\ \hline
\scalebox{\c}{ \begin{tikzpicture}
\GraphInit[vstyle=Classic]
\Vertex[L=\hbox{$0$},x=3.3333cm,y=4.654cm]{v0}
\Vertex[L=\hbox{$1$},x=0.0cm,y=0.4646cm]{v1}
\Vertex[L=\hbox{$2$},x=5.0cm,y=2.5296cm]{v2}
\Vertex[L=\hbox{$3$},x=0.1013cm,y=5.0cm]{v3}
\Vertex[L=\hbox{$4$},x=3.3081cm,y=0.0cm]{v4}
\Edge[](v0)(v3)
\Edge[](v0)(v4)
\Edge[](v1)(v3)
\Edge[](v1)(v4)
\end{tikzpicture} } & $C_4+P_1$ & unknown & N/A \\ \hline
\scalebox{\c}{ \begin{tikzpicture}
\GraphInit[vstyle=Classic]
\Vertex[L=\hbox{$0$},x=3.7285cm,y=4.4694cm]{v0}
\Vertex[L=\hbox{$1$},x=0.0cm,y=2.916cm]{v1}
\Vertex[L=\hbox{$2$},x=5.0cm,y=0.0cm]{v2}
\Vertex[L=\hbox{$3$},x=0.5185cm,y=5.0cm]{v3}
\Vertex[L=\hbox{$4$},x=3.3307cm,y=2.2134cm]{v4}
\Edge[](v0)(v3)
\Edge[](v0)(v4)
\Edge[](v1)(v3)
\Edge[](v1)(v4)
\Edge[](v2)(v4)
\end{tikzpicture} } & banner & finite & \cite[Theorem~3(i)]{Brause2022}\\ \hline
\scalebox{\c}{ \begin{tikzpicture}
\GraphInit[vstyle=Classic]
\Vertex[L=\hbox{$0$},x=0.0cm,y=0.0cm]{v0}
\Vertex[L=\hbox{$1$},x=3.3333cm,y=5.0cm]{v1}
\Vertex[L=\hbox{$2$},x=5.0cm,y=2.52cm]{v2}
\Vertex[L=\hbox{$3$},x=1.0142cm,y=3.659cm]{v3}
\Vertex[L=\hbox{$4$},x=2.372cm,y=1.4212cm]{v4}
\Edge[](v0)(v3)
\Edge[](v0)(v4)
\Edge[](v1)(v3)
\Edge[](v1)(v4)
\Edge[](v3)(v4)
\end{tikzpicture} } & diamond$+P_1$ & infinite & Contains $K_3+P_1$~\cite{KCameron2021,Hoang2015} \\ \hline
\scalebox{\c}{ \begin{tikzpicture}
\GraphInit[vstyle=Classic]
\Vertex[L=\hbox{$0$},x=0.0cm,y=2.3886cm]{v0}
\Vertex[L=\hbox{$1$},x=3.5862cm,y=5.0cm]{v1}
\Vertex[L=\hbox{$2$},x=5.0cm,y=0.0cm]{v2}
\Vertex[L=\hbox{$3$},x=2.9776cm,y=3.3948cm]{v3}
\Vertex[L=\hbox{$4$},x=3.3115cm,y=1.6559cm]{v4}
\Edge[](v0)(v3)
\Edge[](v0)(v4)
\Edge[](v1)(v3)
\Edge[](v2)(v4)
\Edge[](v3)(v4)
\end{tikzpicture} } & bull & finite $k=5$, unknown $k\ge 6$ & \cite{HuangLiXia2023} \\ \hline
\scalebox{\c}{ \begin{tikzpicture}
\GraphInit[vstyle=Classic]
\Vertex[L=\hbox{$0$},x=4.7359cm,y=0.0cm]{v0}
\Vertex[L=\hbox{$1$},x=3.9642cm,y=5.0cm]{v1}
\Vertex[L=\hbox{$2$},x=0.0cm,y=1.2356cm]{v2}
\Vertex[L=\hbox{$3$},x=5.0cm,y=2.6599cm]{v3}
\Vertex[L=\hbox{$4$},x=2.8129cm,y=2.0353cm]{v4}
\Edge[](v0)(v3)
\Edge[](v0)(v4)
\Edge[](v1)(v3)
\Edge[](v1)(v4)
\Edge[](v2)(v4)
\Edge[](v3)(v4)
\end{tikzpicture} } & dart & unknown & N/A \\ \hline
\scalebox{\c}{ \begin{tikzpicture}
\GraphInit[vstyle=Classic]
\Vertex[L=\hbox{$0$},x=0cm,y=0.0cm]{v0}
\Vertex[L=\hbox{$1$},x=3cm,y=0.0cm]{v1}
\Vertex[L=\hbox{$2$},x=6cm,y=0.0cm]{v2}
\Vertex[L=\hbox{$3$},x=1.5cm,y=3cm]{v3}
\Vertex[L=\hbox{$4$},x=4.5cm,y=3cm]{v4}
\Edge[](v0)(v3)
\Edge[](v0)(v4)
\Edge[](v1)(v3)
\Edge[](v1)(v4)
\Edge[](v2)(v3)
\Edge[](v2)(v4)
\end{tikzpicture} } & $K_{2,3}$& finite & \cite{Kaminski2019} \\ \hline
\scalebox{\c}{ \begin{tikzpicture}
\GraphInit[vstyle=Classic]
\Vertex[L=\hbox{$0$},x=0cm,y=0.0cm]{v0}
\Vertex[L=\hbox{$1$},x=3cm,y=0.0cm]{v1}
\Vertex[L=\hbox{$2$},x=6cm,y=0.0cm]{v2}
\Vertex[L=\hbox{$3$},x=1.5cm,y=3cm]{v3}
\Vertex[L=\hbox{$4$},x=4.5cm,y=3cm]{v4}
\Edge[](v0)(v3)
\Edge[](v0)(v4)
\Edge[](v1)(v3)
\Edge[](v1)(v4)
\Edge[](v2)(v3)
\Edge[](v2)(v4)
\Edge[](v3)(v4)
\end{tikzpicture} } & $\overline{K_3+2P_2}$ & unknown & N/A \\ \hline
\scalebox{\c}{ \begin{tikzpicture}
\GraphInit[vstyle=Classic]
\Vertex[L=\hbox{$0$},x=1.1907cm,y=1.1719cm]{v0}
\Vertex[L=\hbox{$1$},x=3.8728cm,y=3.8724cm]{v1}
\Vertex[L=\hbox{$2$},x=0.0cm,y=0.0cm]{v2}
\Vertex[L=\hbox{$3$},x=5.0cm,y=5.0cm]{v3}
\Vertex[L=\hbox{$4$},x=2.5913cm,y=2.5692cm]{v4}
\Edge[](v0)(v2)
\Edge[](v0)(v4)
\Edge[](v1)(v3)
\Edge[](v1)(v4)
\end{tikzpicture} } & $P_5$ & infinite & Contains $2K_2$~\cite{Hoang2015} \\ \hline
\scalebox{\c}{ \begin{tikzpicture}
\GraphInit[vstyle=Classic]
\Vertex[L=\hbox{$0$},x=0.0cm,y=2.721cm]{v0}
\Vertex[L=\hbox{$1$},x=3.9177cm,y=0.0cm]{v1}
\Vertex[L=\hbox{$2$},x=3.1523cm,y=4.5804cm]{v2}
\Vertex[L=\hbox{$3$},x=5.0cm,y=5.0cm]{v3}
\Vertex[L=\hbox{$4$},x=3.047cm,y=0.1986cm]{v4}
\Edge[](v0)(v2)
\Edge[](v0)(v4)
\Edge[](v1)(v3)
\Edge[](v2)(v4)
\end{tikzpicture} } & $K_3+P_2$ & infinite & Contains $2K_2$~\cite{Hoang2015} \\ \hline
\scalebox{\c}{ \begin{tikzpicture}
\GraphInit[vstyle=Classic]
\Vertex[L=\hbox{$0$},x=4.3651cm,y=0.0cm]{v0}
\Vertex[L=\hbox{$1$},x=1.5062cm,y=3.6722cm]{v1}
\Vertex[L=\hbox{$2$},x=5.0cm,y=1.7455cm]{v2}
\Vertex[L=\hbox{$3$},x=0.0cm,y=5.0cm]{v3}
\Vertex[L=\hbox{$4$},x=3.3082cm,y=2.0683cm]{v4}
\Edge[](v0)(v2)
\Edge[](v0)(v4)
\Edge[](v1)(v3)
\Edge[](v1)(v4)
\Edge[](v2)(v4)
\end{tikzpicture} } & co-banner & infinite & Contains $2K_2$~\cite{Hoang2015} (see also~\cite{Brause2022}) \\ \hline
\scalebox{\c}{ \begin{tikzpicture}
\GraphInit[vstyle=Classic]
\Vertex[L=\hbox{$0$},x=4.5353cm,y=5.0cm]{v0}
\Vertex[L=\hbox{$1$},x=0.0cm,y=2.9292cm]{v1}
\Vertex[L=\hbox{$2$},x=5.0cm,y=2.0041cm]{v2}
\Vertex[L=\hbox{$3$},x=0.5146cm,y=0.0cm]{v3}
\Vertex[L=\hbox{$4$},x=2.5801cm,y=2.5676cm]{v4}
\Edge[](v0)(v2)
\Edge[](v0)(v4)
\Edge[](v1)(v3)
\Edge[](v1)(v4)
\Edge[](v2)(v4)
\Edge[](v3)(v4)
\end{tikzpicture} } & butterfly & infinite & Contains $2K_2$~\cite{Hoang2015} \\ \hline
\scalebox{\c}{ \begin{tikzpicture}
\GraphInit[vstyle=Classic]
\Vertex[L=\hbox{$0$},x=0.0cm,y=2.0355cm]{v0}
\Vertex[L=\hbox{$1$},x=5.0cm,y=1.3449cm]{v1}
\Vertex[L=\hbox{$2$},x=1.1563cm,y=5.0cm]{v2}
\Vertex[L=\hbox{$3$},x=2.3237cm,y=0.0cm]{v3}
\Vertex[L=\hbox{$4$},x=4.114cm,y=4.5283cm]{v4}
\Edge[](v0)(v2)
\Edge[](v0)(v3)
\Edge[](v1)(v3)
\Edge[](v1)(v4)
\Edge[](v2)(v4)
\end{tikzpicture} } & $C_5$ & finite $k\le 5$, infinite $k\ge 6$ & \cite{Hoang2015}, Corollary~\ref{cor:P5C5free} \\ \hline
\scalebox{\c}{ \begin{tikzpicture}
\GraphInit[vstyle=Classic]
\Vertex[L=\hbox{$0$},x=3.3673cm,y=3.313cm]{v0}
\Vertex[L=\hbox{$1$},x=1.2924cm,y=0.0cm]{v1}
\Vertex[L=\hbox{$2$},x=0.4987cm,y=5.0cm]{v2}
\Vertex[L=\hbox{$3$},x=5.0cm,y=0.8231cm]{v3}
\Vertex[L=\hbox{$4$},x=0.0cm,y=2.573cm]{v4}
\Edge[](v0)(v2)
\Edge[](v0)(v3)
\Edge[](v0)(v4)
\Edge[](v1)(v3)
\Edge[](v1)(v4)
\Edge[](v2)(v4)
\end{tikzpicture} } & $\overline{P_5}$ & finite & \cite{Dhaliwal2017} \\ \hline
\scalebox{\c}{ \begin{tikzpicture}
\GraphInit[vstyle=Classic]
\Vertex[L=\hbox{$0$},x=3.4786cm,y=5.0cm]{v0}
\Vertex[L=\hbox{$1$},x=0.0cm,y=0.0cm]{v1}
\Vertex[L=\hbox{$2$},x=5.0cm,y=1.7942cm]{v2}
\Vertex[L=\hbox{$3$},x=0.9636cm,y=4.0753cm]{v3}
\Vertex[L=\hbox{$4$},x=2.5013cm,y=1.0642cm]{v4}
\Edge[](v0)(v2)
\Edge[](v0)(v3)
\Edge[](v0)(v4)
\Edge[](v1)(v3)
\Edge[](v1)(v4)
\Edge[](v2)(v4)
\Edge[](v3)(v4)
\end{tikzpicture} } & gem & finite & \cite{CaiGoedgebeurHuang2021} (see also~\cite{CameronHoang2023}) \\ \hline
\scalebox{\c}{ \begin{tikzpicture}
\GraphInit[vstyle=Classic]
\Vertex[L=\hbox{$0$},x=0.0cm,y=3.3251cm]{v0}
\Vertex[L=\hbox{$1$},x=5.0cm,y=0.0cm]{v1}
\Vertex[L=\hbox{$2$},x=3.3091cm,y=3.7807cm]{v2}
\Vertex[L=\hbox{$3$},x=0.1465cm,y=5.0cm]{v3}
\Vertex[L=\hbox{$4$},x=3.4713cm,y=1.8121cm]{v4}
\Edge[](v0)(v2)
\Edge[](v0)(v3)
\Edge[](v0)(v4)
\Edge[](v1)(v4)
\Edge[](v2)(v3)
\Edge[](v2)(v4)
\end{tikzpicture} } & kite & infinite & Contains $K_3+P_1$~\cite{KCameron2021,Hoang2015} \\ \hline
\scalebox{\c}{ \begin{tikzpicture}
\GraphInit[vstyle=Classic]
\Vertex[L=\hbox{$0$},x=2.8604cm,y=0.0cm]{v0}
\Vertex[L=\hbox{$1$},x=5.0cm,y=2.5003cm]{v1}
\Vertex[L=\hbox{$2$},x=0.4766cm,y=5.0cm]{v2}
\Vertex[L=\hbox{$3$},x=0.0cm,y=0.7264cm]{v3}
\Vertex[L=\hbox{$4$},x=3.3333cm,y=4.2748cm]{v4}
\Edge[](v0)(v2)
\Edge[](v0)(v3)
\Edge[](v0)(v4)
\Edge[](v2)(v3)
\Edge[](v2)(v4)
\Edge[](v3)(v4)
\end{tikzpicture} } & $K_4+P_1$ & infinite & Contains $K_3+P_1$~\cite{KCameron2021,Hoang2015} \\ \hline
\scalebox{\c}{ \begin{tikzpicture}
\GraphInit[vstyle=Classic]
\Vertex[L=\hbox{$0$},x=5.0cm,y=1.5577cm]{v0}
\Vertex[L=\hbox{$1$},x=0.0cm,y=5.0cm]{v1}
\Vertex[L=\hbox{$2$},x=3.6959cm,y=0.0cm]{v2}
\Vertex[L=\hbox{$3$},x=4.368cm,y=4.2227cm]{v3}
\Vertex[L=\hbox{$4$},x=2.5327cm,y=3.1431cm]{v4}
\Edge[](v0)(v2)
\Edge[](v0)(v3)
\Edge[](v0)(v4)
\Edge[](v1)(v4)
\Edge[](v2)(v3)
\Edge[](v2)(v4)
\Edge[](v3)(v4)
\end{tikzpicture} } & $\overline{\text{claw}+K_1}$ & infinite & Contains $K_3+P_1$~\cite{KCameron2021,Hoang2015} \\ \hline
\scalebox{\c}{ \begin{tikzpicture}
\GraphInit[vstyle=Classic]
\Vertex[L=\hbox{$0$},x=1.6274cm,y=0.0cm]{v0}
\Vertex[L=\hbox{$1$},x=5.0cm,y=5.0cm]{v1}
\Vertex[L=\hbox{$2$},x=0.0cm,y=1.9492cm]{v2}
\Vertex[L=\hbox{$3$},x=2.2375cm,y=3.5065cm]{v3}
\Vertex[L=\hbox{$4$},x=3.7159cm,y=1.9038cm]{v4}
\Edge[](v0)(v2)
\Edge[](v0)(v3)
\Edge[](v0)(v4)
\Edge[](v1)(v3)
\Edge[](v1)(v4)
\Edge[](v2)(v3)
\Edge[](v2)(v4)
\Edge[](v3)(v4)
\end{tikzpicture} } & $\overline{P_3+2P_1}$ & unknown & N/A \\ \hline
\scalebox{\c}{ \begin{tikzpicture}
\GraphInit[vstyle=Classic]
\Vertex[L=\hbox{$0$},x=3.5002cm,y=3.4431cm]{v0}
\Vertex[L=\hbox{$1$},x=1.8621cm,y=0.8079cm]{v1}
\Vertex[L=\hbox{$2$},x=1.3384cm,y=5.0cm]{v2}
\Vertex[L=\hbox{$3$},x=5.0cm,y=0.0cm]{v3}
\Vertex[L=\hbox{$4$},x=0.0cm,y=2.8691cm]{v4}
\Edge[](v0)(v2)
\Edge[](v0)(v3)
\Edge[](v0)(v4)
\Edge[](v1)(v2)
\Edge[](v1)(v3)
\Edge[](v1)(v4)
\Edge[](v2)(v4)
\end{tikzpicture} } & paraglider or $\overline{P_3+P_2}$ & finite & \cite{CaiGoedgebeurHuang2021} \\ \hline
\scalebox{\c}{ \begin{tikzpicture}
\GraphInit[vstyle=Classic]
\Vertex[L=\hbox{$0$},x=5.0cm,y=0.3734cm]{v0}
\Vertex[L=\hbox{$1$},x=0.0cm,y=4.4903cm]{v1}
\Vertex[L=\hbox{$2$},x=4.6651cm,y=5.0cm]{v2}
\Vertex[L=\hbox{$3$},x=0.4475cm,y=0.0cm]{v3}
\Vertex[L=\hbox{$4$},x=2.4963cm,y=2.4758cm]{v4}
\Edge[](v0)(v2)
\Edge[](v0)(v3)
\Edge[](v0)(v4)
\Edge[](v1)(v2)
\Edge[](v1)(v3)
\Edge[](v1)(v4)
\Edge[](v2)(v4)
\Edge[](v3)(v4)
\end{tikzpicture} } & $W_4$ & unknown & N/A \\ \hline
\scalebox{\c}{ \begin{tikzpicture}
\GraphInit[vstyle=Classic]
\Vertex[L=\hbox{$0$},x=0.0cm,y=1.4442cm]{v0}
Vertex[NoLabel,[L=\hbox{$1$},x=5.0cm,y=4.7619cm]{v1}
Vertex[NoLabel,[L=\hbox{$2$},x=2.0998cm,y=5.0cm]{v2}
Vertex[NoLabel,[L=\hbox{$3$},x=2.4654cm,y=0.0cm]{v3}
Vertex[NoLabel,[L=\hbox{$4$},x=5.0cm,y=1.7609cm]{v4}
\Edge[](v0)(v2)
\Edge[](v0)(v3)
\Edge[](v0)(v4)
\Edge[](v1)(v2)
\Edge[](v1)(v3)
\Edge[](v1)(v4)
\Edge[](v2)(v3)
\Edge[](v2)(v4)
\Edge[](v3)(v4)
\end{tikzpicture} } & $K_5-e$ & unknown & N/A \\ \hline
\scalebox{\c}{ \begin{tikzpicture}
\GraphInit[vstyle=Classic]
Vertex[NoLabel,[L=\hbox{$0$},x=1.9646cm,y=0.0cm]{v0}
Vertex[NoLabel,[L=\hbox{$1$},x=4.8877cm,y=0.6856cm]{v1}
Vertex[NoLabel,[L=\hbox{$2$},x=0.0cm,y=2.6875cm]{v2}
Vertex[NoLabel,[L=\hbox{$3$},x=5.0cm,y=4.3217cm]{v3}
Vertex[NoLabel,[L=\hbox{$4$},x=2.0868cm,y=5.0cm]{v4}
\Edge[](v0)(v1)
\Edge[](v0)(v2)
\Edge[](v0)(v3)
\Edge[](v0)(v4)
\Edge[](v1)(v2)
\Edge[](v1)(v3)
\Edge[](v1)(v4)
\Edge[](v2)(v3)
\Edge[](v2)(v4)
\Edge[](v3)(v4)
\end{tikzpicture} } & $K_5$ & infinite $k=5$, unknown $k\ge 6$ & \cite{Hoang2015} \\ \hline
\caption{The state-of-the-art for $H$ of order $5$.}\label{tab:order5suvery}
\end{longtable}
\end{center}

\bibliographystyle{abbrv}
\bibliography{refs}

\end{document}